%
%
%
%
\documentclass{amsart}

\usepackage{amscd}
\usepackage{enumitem}

\newtheorem{theorem}{Theorem}[section]
\newtheorem{lemma}[theorem]{Lemma}

\theoremstyle{definition}
\newtheorem{definition}[theorem]{Definition}
\newtheorem{example}[theorem]{Example}

\newtheorem{corollary}[theorem]{Corollary}
\newtheorem{proposition}[theorem]{Proposition}
\theoremstyle{remark}
\newtheorem{remark}[theorem]{Remark}

\numberwithin{equation}{section}



\begin{document}

\title{On Lattice points and Optimal Packing of Minkowski's Balls and Domains}
 
\author{Nikolaj M. Glazunov}
\address{Glushkov Institute of Cybernetics NASU, Kiev, Ukraine}
\curraddr{Institute of Mathematics and Informatics Bulgarian Academy of Sciences, 1113 Sofia, Bulgaria}
\email{glanm@yahoo.com}
\thanks{The author was supported   Simons grant 992227.}


\subjclass[2020]{Primary 11H06, 52C05; Secondary 49Q10, 90C25 }



\keywords{Lattice packing, Minkowski's metric, Minkowski's ball, Minkowski's domain, critical lattice, optimal lattice packing, packing density, central density, direct system,   direct limit}

\begin{abstract}
 
We investigate lattice packings of  Minkowski's balls  and   domains, as well as the distribution of lattice points on  Minkowski's curves which are boundaries of  Minkowski's balls. By results of the proof of Minkowski's conjecture about the critical determinant we devide the balls and domains on 3 classes: Minkowski, Davis  and  Chebyshev-Cohn balls.  The optimal lattice packings of the  balls and domains are obtained.  The minimum areas of  hexagons inscribed in the balls and domains and  circumscribed around their are given. We construct direct systems of these balls, domains and their critical lattices and calculate their direct limits.
\end{abstract}

\maketitle

\section*{Introduction}
  Let
\begin{equation}
\label{mb}
   D_p: \;  |x|^p + |y|^p \le 1, \; p \ge 1
\end{equation}
be (two-dimensional) Minkowski's balls with boundary 
\begin{equation}
C_p: \; |x|^p + |y|^p = 1, \;  p \ge 1.
\end{equation}
\
By results of the proof of Minkowski's conjecture we divide balls (\ref{mb}) and for natural $m$ corresponding domains $2^m D_p,$ on three classes: Minkowski, Davis and  Chebyshev-Cohn balls and domains (see below).

In the paper we prove two main theorems.

  \begin{theorem} (cf. Theorem \ref{opb})
The optimal lattice packing of  Minkowski and Chebyshev-Cohn balls is realized with respect to the sublattices of the index two of  critical lattices
  \[
  (-2^{-1/p},2^{-1/p}) \in \Lambda_{p}^{(1)}.
  \]   
The dencity of this optimal packing is given by the expression
\[
    V(D_p)/\Delta^{(1)}_p (2D_p),
\]
where $V(D_p)$ is the volume of the ball $D_p$ and $\Delta^{(1)}_p (2D_p)$ is the critical determinant of the domain $2D_p$.\\
The optimal lattice packing of   Davis  balls is realized with respect to the sublattices of the index two of  critical lattices
  \[
  (1,0)\in\Lambda_{p}^{(0)},
  \]   
The dencity of this optimal packing is given by the expression
\[
    V(D_p)/\Delta^{(0)}_p(2D_p),
\]
where $V(D_p)$ is the volume of the ball $D_p$ and $\Delta^{(0)}_p(2D_p)$ is the critical determinant of the domain $2D_p$.
  \end{theorem}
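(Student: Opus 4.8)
The plan is to deduce the statement from two inputs: the general reduction of densest lattice packing to the critical determinant of the difference body, and the explicit critical lattices of $D_p$ furnished by the proof of Minkowski's conjecture on which the three-class division rests. First I would set up the packing--admissibility dictionary. For $p\ge 1$ the ball $D_p$ is convex and origin-symmetric, so its difference body is $2D_p$, and a lattice $\Lambda$ gives a packing of the translates $\{D_p+\lambda:\lambda\in\Lambda\}$ if and only if $\Lambda$ is admissible for $2D_p$, that is $\Lambda\cap\mathrm{int}(2D_p)=\{0\}$. The density of such a packing is $V(D_p)/\det\Lambda$, so minimizing $\det\Lambda$ over lattices admissible for $2D_p$ produces the densest lattice packing, of density $V(D_p)/\Delta_p(2D_p)$. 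This already yields the density asserted in the theorem; what remains is to exhibit the extremal lattice and to exploit the homogeneity $\Delta_p(2D_p)=4\,\Delta_p(D_p)$, which ties the critical determinant of the domain to that of the ball.

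Second I would invoke the solution of Minkowski's conjecture to name the extremal lattices regime by regime. In the Minkowski and Chebyshev--Cohn ranges the infimum $\Delta_p(D_p)$ is attained by the lattice $\Lambda_p^{(1)}$, one of whose contacts with the curve $C_p$ is the diagonal point $(-2^{-1/p},2^{-1/p})$; in the Davis range it is attained by $\Lambda_p^{(0)}$, with the axis contact $(1,0)$. The essential content is that in each range the quoted family realizes the minimum determinant while the competing family is strictly non-extremal; this is precisely the case division produced by the proof of the conjecture, which I would cite rather than reprove.

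Third I would match the extremal packing lattice with the object named in the statement. Since $\Lambda_p^{(i)}$ meets $C_p$, the balls centred at its points only touch, so $\Lambda_p^{(i)}$ is admissible for $D_p$ but not for $2D_p$; the packing lattice must therefore be obtained by passing to the index-two sublattice prescribed by the contact structure of $\Lambda_p^{(i)}$ and the domain $2D_p$. The step to carry out carefully is to fix the normalization of $\Lambda_p^{(i)}$ and the precise sublattice so that it is admissible for $2D_p$ and has determinant exactly $\Delta_p(2D_p)$; once this is done, the reduction of the first paragraph shows that the packing is optimal and has density $V(D_p)/\Delta_p(2D_p)$, realized through $\Lambda_p^{(1)}$ in the Minkowski and Chebyshev--Cohn cases and through $\Lambda_p^{(0)}$ in the Davis case.

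I expect the main obstacle to lie in this last step together with the underlying case division. Everything rests on the explicit extremal lattices coming from the proof of Minkowski's conjecture and on the determinant bookkeeping: one must confirm that the prescribed index-two sublattice genuinely resolves the boundary contacts of the critical configuration into a packing while keeping the determinant at the critical value $\Delta_p(2D_p)$, reconciling this with the relation $\Delta_p(2D_p)=4\,\Delta_p(D_p)$. The most delicate point is the behaviour at the transition exponents separating the three classes, where the extremal lattice ceases to be unique and one must check that the competing families deliver the same density.
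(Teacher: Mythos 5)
Your proposal follows essentially the same route as the paper's own proof. The paper's packing--admissibility equivalence (a lattice packs $D_p$ iff it is admissible for $2D_p$) is your first-paragraph dictionary, its Proposition \ref{p3} is your density formula, Theorem \ref{ggm} supplies exactly your case division ($\Lambda_p^{(1)}$ extremal in the Minkowski and Chebyshev--Cohn ranges, $\Lambda_p^{(0)}$ in the Davis range), and the paper's Proposition \ref{p2} plays the role of your third step. The one methodological difference is that where you invoke the homogeneity $\Delta_p(2D_p)=4\,\Delta_p(D_p)$, the paper rederives this by re-running the Minkowski--Cohn parametrization of admissible lattices with three pairs of boundary contact points, doubling the basis vectors and obtaining $\Delta^{(0)}_p(2D_p)=2\sigma_p$ and $\Delta^{(1)}_p(2D_p)=4^{1-\frac{1}{p}}\frac{1+\tau_p}{1-\tau_p}$; the two justifications are equivalent, and yours is the shorter. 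Your remark about the transition exponents $p=2$ and $p=p_0$ is harmless: there the two determinants coincide, so both families are critical and give the same density.

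The delicate point you flag at the end is real, and it deserves to be stated bluntly: the requirement you set yourself --- an index-two sublattice of $\Lambda_p^{(i)}$ that is admissible for $2D_p$ with determinant $\Delta_p(2D_p)$ --- cannot be met. An index-two sublattice has determinant $2\Delta_p(D_p)$, which is strictly smaller than $\Delta_p(2D_p)=4\,\Delta_p(D_p)$, and any lattice whose determinant lies below the critical determinant of $2D_p$ is by definition not admissible for $2D_p$, hence not a packing lattice of $D_p$. The lattice that actually realizes the optimum is $2\Lambda_p^{(i)}$, the sublattice of index $4=2^2$ obtained by doubling, which is admissible for $2D_p$ by scaling and has determinant exactly $4\,\Delta_p(D_p)$. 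This is precisely what the paper's Proposition \ref{p2} computes (its displayed lattices for $2D_p$ are the doubled bases, with moduli space scaled by $4$), even though the paper, like the theorem statement itself, labels this sublattice as having ``index two.'' So your reconstruction, completed in the only way the determinant bookkeeping permits, coincides with the paper's argument up to this wording correction, and your instinct that the index claim needed reconciliation was right: it is the statement's phrase ``index two,'' not your reduction, that has to give way.
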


\begin{theorem} (cf. Theorem \ref{opd})
The optimal lattice packing of  Minkowski and Chebyshev-Cohn domains $2^m D_p$ is realized with respect to the sublattices of the index $2^{m+1}$ of  critical lattices
  \[
  (-2^{-1/p},2^{-1/p}) \in \Lambda_{p}^{(1)}.
  \]   
The dencity of this optimal packing is given by the expression
\[
    V(2^m D_p)/\Delta^{(1)}_p (2^{m+1}D_p),
\]    
    where $V(2^m D_p)$ is the volume of the domainl $2^m D_p$ and $\Delta^{(1)}_p (2^{m+1} D_p)$ is the critical determinant of the domain $2^{m+1} D_p$.\\
The optimal lattice packing of  Davis domains $2^m D_p$ is realized with respect to the sublattices of the index $2^{m+1}$  of  critical lattices
  \[
  (1,0)\in\Lambda_{p}^{(0)},
  \]   
The dencity of this optimal packing is given by the expression 
\[
    V(2^m D_p )/\Delta^{(0)}_p(2^{m+1} D_p)
\]
 where $V(2^m D_p)$ is the volume of the domainl $2^m D_p$ and $\Delta^{(0)}_p (2^{m+1} D_p)$ is the critical determinant of the domain $2^{m+1} D_p$.\\
     \end{theorem}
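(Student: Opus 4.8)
The plan is to derive this statement from Theorem~\ref{opb} by transporting the ball case to the domain case through the homothety $\sigma_m\colon \mathbf{R}^2\to\mathbf{R}^2$, $\sigma_m(x,y)=(2^m x,\,2^m y)$, which carries $D_p$ onto $2^m D_p$. I would first recall the standard translation of packing into an admissibility problem: since $D_p$ is centrally symmetric, a lattice $\Lambda$ packs $2^m D_p$ (its translates have pairwise disjoint interiors) if and only if $\Lambda$ is admissible for $2^{m+1}D_p$, that is, it contains no nonzero point with $|x|^p+|y|^p<2^{(m+1)p}$; the packing density is then $V(2^m D_p)/d(\Lambda)$, and it is maximal exactly when $d(\Lambda)$ equals the critical determinant $\Delta^{(1)}_p(2^{m+1}D_p)$. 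Thus the assertion about the density is equivalent to exhibiting a single admissible lattice attaining this minimal determinant.

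Next I would record the functorial behaviour of $\sigma_m$. Being linear and invertible with $\det\sigma_m=4^m$, it maps lattices to lattices, multiplies every lattice determinant by $4^m$, and induces a bijection between the lattices admissible for $2D_p$ and those admissible for $\sigma_m(2D_p)=2^{m+1}D_p$. Consequently it sends critical lattices to critical lattices and gives $\Delta^{(1)}_p(2^{m+1}D_p)=4^m\,\Delta^{(1)}_p(2D_p)$, and likewise for the superscript $(0)$. Because $V(2^m D_p)=4^m V(D_p)$, the quotient $V/d$ is invariant under $\sigma_m$, so the optimal density for $2^m D_p$ coincides with the optimal density for $D_p$ established in Theorem~\ref{opb}; this already produces the two displayed density formulas.

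It then remains to identify the optimal lattice and its position inside the critical lattice. The trichotomy into Minkowski, Davis and Chebyshev--Cohn bodies depends only on $p$, namely on which of the families $\Lambda^{(0)}_p$, $\Lambda^{(1)}_p$ realizes $\Delta(D_p)$, and $\sigma_m$ merely rescales; hence $2^m D_p$ lies in the same class as $D_p$ and is governed by the same family, $\Lambda^{(1)}_p$ for the Minkowski and Chebyshev--Cohn domains and $\Lambda^{(0)}_p$ for the Davis domains. By Theorem~\ref{opb} the optimal packing of $D_p$ is realized by an index-two sublattice $L$ of $\Lambda^{(1)}_p$ (resp.\ of $\Lambda^{(0)}_p$) determined by the boundary point $(-2^{-1/p},2^{-1/p})$ (resp.\ $(1,0)$), and I would then claim that $\sigma_m(L)$ realizes the optimal packing of $2^m D_p$.

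The main work, and the step I expect to be the genuine obstacle, is the index bookkeeping: presenting $\sigma_m(L)$ as a sublattice of the \emph{original} critical lattice $\Lambda^{(1)}_p$ and showing that its index is exactly the value $2^{m+1}$ recorded in the statement, while simultaneously confirming admissibility for $2^{m+1}D_p$ and minimality of the determinant. Here one must reconcile the index of the chain $\sigma_m(L)\subset L\subset\Lambda^{(1)}_p$ with the determinant scaling $d(\sigma_m(L))=4^m d(L)$, which is delicate precisely because these two accountings must be made to agree with the normalization in the statement. I would organize this by induction on $m$, passing from $2^{m-1}D_p$ to $2^m D_p$ by a single homothety by $2$ (the base case $m=0$ being Theorem~\ref{opb}), and at each step verify directly from the explicit generators of $\Lambda^{(1)}_p$ and $\Lambda^{(0)}_p$ that the resulting sublattice has no nonzero point in the interior of the doubled domain. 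Carrying the same computation through the point $(1,0)$ for $\Lambda^{(0)}_p$ then completes the Davis case in parallel.
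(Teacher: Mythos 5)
Your proposal is correct in substance and is essentially the paper's own argument. The paper proves Theorem \ref{opd} by combining (i) the packing--admissibility equivalence (a lattice packs $2^m D_p$ if and only if it is admissible for $2^{m+1}D_p$), (ii) the density formula $V(2^m D_p)/d(\Lambda)$ with maximality exactly when $d(\Lambda)$ is the critical determinant of the doubled domain (Propositions \ref{p3}, \ref{p2m21m}), and (iii) a scaling statement (Proposition \ref{p7}) identifying the critical lattices and critical determinants of the dilated domains with the dilates of those of $D_p$. Steps (i)--(ii) are your first paragraph almost verbatim; the only real difference is in (iii): you obtain the scaling abstractly from the functoriality of admissibility and determinants under the invertible linear map $\sigma_m$, whereas the paper rescales the explicit Minkowski--Cohn parametrization of admissible lattices with three pairs of boundary points and reads off the formulas (\ref{cdde0}), (\ref{cdde1}). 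Your reduction to Theorem \ref{opb}, versus the paper's direct reapplication of its propositions, is packaging rather than mathematics; if anything your route is safer, since the paper's formula (\ref{cdde0}) carries an exponent slip ($2^{m-1}\sigma_p$ where scaling by $4^m$ gives $4^m\cdot\tfrac{1}{2}\sigma_p = 2^{2m-1}\sigma_p$).

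Concerning the step you single out as the genuine obstacle: it is unresolvable as literally posed, and the paper never resolves it either, because the index normalization in the statement is not correct in standard terminology. Your homothety argument produces the optimal lattice $\sigma_m(L) = 2^{m+1}\Lambda_p^{(i)}$, whose index in $\Lambda_p^{(i)}$ is $(2^{m+1})^2 = 4^{m+1}$, not $2^{m+1}$. Already for $m=0$ the optimal packing lattice of $D_p$ is $2\Lambda_p^{(i)}$, of index $4$; indeed none of the three index-two sublattices of a critical lattice $\Lambda = \langle a,b\rangle$ (with $|a|_p=|b|_p=1$) can pack $D_p$, since each such sublattice contains one of $a$, $b$, $a+b$, and all three of these points lie strictly inside $2D_p$ by the triangle inequality. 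The phrase ``sublattice of index $2^{m+1}$'' in the statement (and ``index two'' in Theorem \ref{opb}) is the paper's shorthand for the sublattice $2^{m+1}\Lambda_p^{(i)}$; under that reading the determinant and density claims are exactly what your homothety computation yields, and no induction or further bookkeeping is needed. So the tension you flagged is real, but it is a defect of the statement's wording, not a gap your argument has to fill.
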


A system of equal  balls in $n$-dimensional space is said to form a packing, if no two balls of the 
system have any inner points in common.
Recently, remarkable results in resolving the problem of optimal packing of Euclidean balls in $8$ and 
$24$-dimensional real Euclidean spaces have been obtained by Viazovska \cite{via} and Cohn, Kumar, Miller, Radchenko, and Viazovska \cite{ckmrv},
respectively. In this research,  acknowledged by a Fields medal, optimal packings of Euclidean balls are proved to be defined by famous lattices.
Our investigations connect with Minkowski conjecture~\cite{Mi:DA,M:LP,D:NC,Co:MC,W:MC,Ma:AC1,GM:MM,GM:P2,GGM:PM}  and use 
results of its proof. Corresponding results and conjectures are stated in the simpliest way in terms of geometric lattices and critical
lattices~\cite{Mi:GZ,Mi:DA,Cassels,lek}. These lattices constitute an important particular case of geometric lattices.
We consider balls in the plane, defined by
\[   D_p: \;  |x|^p + |y|^p \le 1 \]
for different values of $p \geq 1$. We investigate lattice packings of Minkowski balls, Davis balls  and Chebyshev-Cohn balls, names coming 
in correspondence with ranges for $p$, i.e. with varying Minkowski metric, and search for optimal lattice packings of these balls. 
The packing problem is studied on  classes of lattices related to the problem of the theory of Diophantine approximations 
considered by H. Minkowski \cite{Mi:DA} for the plane case.
For some other selected  problems and results of the theory of Diophantine approximations the interested reader can 
look, for instance in \cite{AndersenDuke} and references therein.
The second section deals very briefly with shells of critical lattices of Minkowski curves, 
strictly convex and smooth algebraic curves related to the Minkowski conjecture, and integer points on curves.
Our naming of balls is related and justified with results of investigations of the Minkowski conjecture on critical determinant.
We present examples of optimal lattice packings with Minkowski, Davis and  Chebyshev-Cohn balls and domains. Our main results are
theorems on lattices realizing optimal packings by the corresponding balls and domains, which are formulated and  their proof is given.
More precisely, we prove in section \ref{secpmdccballs} that the optimal lattice packing of the Minkowski, Davis, and Chebyshev-Cohn balls is realized with respect to the sublattices of index two of the critical lattices of corresponding  balls.
In section \ref{secpmdccdomains} ``Critical lattices, Minkowski domains and  their optimal packing'' we investigate lattice packings of  Minkowski    domains. By results of the proof of Minkowski conjecture about the critical determinant we devide the domains on 3 classes: Minkowski, Davis  and  Chebyshev-Cohn.  The optimal lattice packings of the   domains are obtained.
Section \ref{secichminarea}  gives minimum areas of  inscribed and circumscribed hexagons of  Minkowski,  Davis  and  Chebyshev-Cohn domains. We construct direct systems of Minkowski, Davis and Chebyshev-Cohn balls and domains, direct systems of their critical lattices and calculate their direct limits in section \ref{secdsdld}.

\section{Minkowski conjecture, its proof and  Minkowski balls}
Let
 $$ |\alpha x + \beta y|^p + |\gamma x + \delta y|^p \leq c \cdot 
 |\det(\alpha \delta - \beta \gamma)|^{p/2}, $$ 

be a diophantine inequality defined for a given real $ p >1 $;
hear $\alpha, \beta, \gamma, \delta$ are real numbers with 
$ \alpha \delta - \beta \gamma \neq 0 .$ 

H. Minkowski in his monograph~\cite{Mi:DA} raise the question
about minimum constant $c$ 
such that the inequality has integer 
solution other than origin. 
Minkowski with the help of his theorem on convex body has found a sufficient condition for the solvability of Diophantine inequalities in integers not both zero:
$$ c = {\kappa_p}^p,  \kappa_{p} = \frac{{\Gamma(1 + \frac{2}{p})}^{1/2}}{{\Gamma(1 + \frac{1}{p})}}.$$
 But this result is not optimal, and Minkowski also raised the issue of not improving constant $c$.
For this purpose Minkowski has proposed to use the critical determinant.

Recall the definitions~\cite{Cassels}.

Let $\mathcal R $ be a set and $\Lambda $ be a  lattice with base 
\[
\{a_1, \ldots ,a_n \}
\] 
in ${\mathbb R}^n.$ \\

A lattice $\Lambda $
is {\it admissible} for body $\mathcal R $ 
($ {\mathcal R}-${\it admissible})
if ${\mathcal R} \bigcap \Lambda = \emptyset $ or $0.$\\
 
 Let $\overline {\mathcal R}$ be the closure of $ {\mathcal R}$.
A lattice $\Lambda $
is {\it strictly admissible} for  $\overline {\mathcal R}$  
( $\overline {\mathcal R}-${\it strictly  admissible})
if $\overline {\mathcal R} \bigcap \Lambda = \emptyset $ or $0.$\\

Let
\[
  d(\Lambda) = |\det (a_1, \ldots ,a_n )|
 \] 
   be the determinant of 
$\Lambda.$ \\

The infimum
$\Delta(\mathcal R) $ of determinants of all lattices admissible for
$\mathcal R $ is called {\em the critical determinant} 
of $\mathcal R; $
if there is no $\mathcal R-$admissible lattices then puts
$\Delta(\mathcal R) = \infty. $ \\

 A lattice 
$\Lambda $ is {\em critical}
if $ d(\Lambda) = \Delta(\mathcal R).$ \\

 For the given 2-dimension
region $ D_p \subset {\mathbb R}^2 = (x,y), \ p \ge 1 $ :

$$ |x|^p + |y|^p < 1 , $$
let $\Delta(D_p) $ 
be the critical determinant of the region.

 In notations
\cite{GGM:PM} 
next result have proved:

\begin{theorem}
\cite{GGM:PM}
\label{ggm}
$$\Delta(D_p) = \left\{
                   \begin{array}{lc}
    \Delta(p,1), \; 1 \le p \le 2, \; p \ge p_{0},\\
    \Delta(p,\sigma_p), \;  2 \le p \le p_{0};\\
                     \end{array}
                       \right.
                           $$
here $p_{0}$ is a real number that is defined unique by conditions
$\Delta(p_{0},\sigma_p) = \Delta(p_{0},1),  \;
2,57 < p_{0}  < 2,58, \; p_0  \approx 2.5725 $
\end{theorem}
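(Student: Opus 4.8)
The plan is to reconstruct the Minkowski--Davis--Cohn line of argument and to prove the formula in four stages: (i) existence of a critical lattice, (ii) reduction of its shape to a one-parameter family using the symmetries of $D_p$, (iii) an explicit formula for the determinant as a function $\Delta(p,\sigma)$ of that parameter, and (iv) the comparison of the two competing stationary configurations, which isolates the transition value $p_0$. Throughout I treat $p>1$, where $C_p$ is smooth and strictly convex; the polygonal endpoint $p=1$ is handled separately by a direct computation and by continuity of the resulting expression.

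First I would establish existence and structure. Since $D_p$ is bounded, $0$-symmetric and (for $p>1$) strictly convex with smooth boundary, Mahler's compactness (selection) theorem guarantees that the infimum defining $\Delta(D_p)$ is attained by an admissible lattice $\Lambda_0$, which is moreover strictly admissible, so that $\Lambda_0$ meets $C_p$ only in boundary points. A standard deformation argument (see \cite{Cassels}) then forces $\Lambda_0$ to carry at least three pairs $\pm P_1,\pm P_2,\pm P_3$ of points on $C_p$: were there fewer, one could apply a one-parameter, area-decreasing deformation preserving admissibility, contradicting criticality. Strict convexity lets one arrange $P_3=P_2-P_1$, so the six points are the vertices of a hexagon inscribed in $C_p$ whose adjacent vertices $P_1,P_2$ generate $\Lambda_0$.

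Next I would use the dihedral symmetry of $D_p$ (reflections in the coordinate axes and in the lines $y=\pm x$) to normalize the inscribed hexagon and cut the number of free parameters down to one. Writing the two generating vertices as $P_1=(x_1,y_1)$ and $P_2=(x_2,y_2)$, subject to
\[
|x_1|^{p}+|y_1|^{p}=1,\quad |x_2|^{p}+|y_2|^{p}=1,\quad |x_1-x_2|^{p}+|y_1-y_2|^{p}=1 ,
\]
one eliminates all but a single shape parameter $\sigma$ and obtains an explicit expression $\Delta(p,\sigma)=|x_1y_2-x_2y_1|$ for the determinant. Two values of $\sigma$ are distinguished: the symmetric value (normalized to $\sigma=1$) gives the lattice $\Lambda_p^{(1)}$ with a vertex at $(-2^{-1/p},2^{-1/p})$, which is the configuration Minkowski conjectured to be critical for every $p$; a second stationary value $\sigma=\sigma_p$ gives the lattice $\Lambda_p^{(0)}$ with a vertex at $(1,0)$, the configuration discovered by Davis to be critical in an intermediate range.

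The decisive and hardest stage is the minimization. For each fixed $p$ I would analyze $\partial_\sigma\Delta(p,\sigma)=0$ and show that, among all admissible inscribed-hexagon lattices, the global minimum of the determinant occurs only at $\sigma=1$ or at $\sigma=\sigma_p$; this demands a careful monotonicity/convexity study of $\Delta(p,\cdot)$ and the exclusion of degenerate (boundary) hexagons. Comparing the two survivors as functions of $p$, I would then prove that $\Delta(p,1)\le\Delta(p,\sigma_p)$ for $1\le p\le 2$ and for $p\ge p_0$, with the inequality reversed on $2\le p\le p_0$; the two branches coincide at $p=2$ (the Euclidean hexagonal lattice) and cross again at the unique root $p_0$ of $\Delta(p_0,1)=\Delta(p_0,\sigma_p)$, so that $\Delta(D_p)=\min\{\Delta(p,1),\Delta(p,\sigma_p)\}$ takes the stated piecewise form. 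I expect the main obstacle to lie precisely here: since $p_0$ is only defined transcendentally, the comparison of the two branches near $p_0$ cannot be settled in closed form and must be controlled by rigorous estimates (effectively a certified numerical argument), which is also what yields the localization $2.57<p_0<2.58$ and $p_0\approx 2.5725$.
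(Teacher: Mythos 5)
The first thing to note is that the paper does not prove this statement at all: Theorem \ref{ggm} is imported, with citation, from \cite{GGM:PM}, the 1986 paper of Glazunov, Golovanov and Malyshev that completed the proof of the (Davis-corrected) Minkowski conjecture; the present paper only uses the result. Your reconstruction of the framework behind that proof is historically accurate --- Mahler's selection theorem for existence, the three-pairs-of-boundary-points property of critical lattices of a bounded symmetric convex body, Cohn's reduction to the one-parameter family $\Delta(p,\sigma)$ on $1\le\sigma\le\sigma_p$, and the two distinguished endpoint lattices $\Lambda_p^{(1)}$ (at $\sigma=1$, vertex $(-2^{-1/p},2^{-1/p})$) and $\Lambda_p^{(0)}$ (at $\sigma=\sigma_p$, vertex $(1,0)$) --- and it matches the parametrization the paper itself recalls in Section \ref{secpmdccballs}.

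However, there is a genuine gap: your stage (iv) is not a finishing step to be ``controlled by rigorous estimates''; it \emph{is} the theorem. The assertion that $\Delta(p,\sigma)\ge\min\{\Delta(p,1),\Delta(p,\sigma_p)\}$ throughout the moduli space $\mathcal M$ is precisely the Minkowski--Cohn conjecture; everything preceding it in your outline (existence, hexagon structure, parametrization, identification of the two candidates) was already known to Minkowski, Cohn \cite{Co:MC} and Davis \cite{D:NC}, and the remaining step resisted proof for decades (Watson \cite{W:MC} settled partial ranges of $p$, Malyshev's computer-assisted program \cite{Ma:AC,Ma:AC1} and finally \cite{GM:P2,GGM:PM} completed it). Deferring it to an unspecified ``monotonicity/convexity study'' restates the problem rather than solves it; in fact $\Delta(p,\cdot)$ is not amenable to a simple convexity argument. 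You also misplace the difficulty: near $p_0$ the two branches $\Delta(p,1)=4^{-1/p}(1+\tau_p)/(1-\tau_p)$ and $\Delta(p,\sigma_p)=\tfrac12(2^p-1)^{1/p}$ are explicit one-variable functions crossing transversally, so certified interval arithmetic disposes of that region (and yields $2.57<p_0<2.58$, $p_0\approx 2.5725$). The genuinely hard regime is near $p=2$, where $\Delta(2,\sigma)$ is \emph{constant} in $\sigma$ (every lattice with three pairs of points on the unit circle is a rotation of the hexagonal lattice), so the inequality to be proved degenerates to an identity at $p=2$ and all estimates must be second-order in $p-2$; this is exactly why the neighborhood of $p=2$ required the separate paper \cite{GM:P2} and the subdivision-plus-estimates machinery of \cite{GGM:PM}. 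Without an actual argument for the endpoint-minimum property, uniform in $p$ and including this degenerate neighborhood, your proposal is a correct survey of the strategy but not a proof.
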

\begin{remark}
We will call $p_{0}$ the Davis constant.
\end{remark}
\begin{corollary}
$$ {\kappa_p} = {\Delta(D_p)}^{-\frac{p}{2}}.  $$
\end{corollary}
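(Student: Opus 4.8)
The plan is to read the corollary as the identification of the \emph{best} constant $c$ in the displayed Diophantine inequality with a power of the critical determinant, and to derive it directly from the definition of $\Delta(D_p)$ by a linear change of variables together with the homogeneity of the problem. Given a nonsingular real matrix $M$ with entries $\alpha,\beta,\gamma,\delta$, set $\Lambda=M\mathbb{Z}^2$, so that $d(\Lambda)=|\det M|=:D$ and the pairs $(u,v)=(\alpha x+\beta y,\gamma x+\delta y)$ with $(x,y)\in\mathbb{Z}^2$ are precisely the points of $\Lambda$. The inequality $|u|^p+|v|^p\le c\,D^{p/2}$ then says exactly that $\Lambda$ meets the closed Minkowski ball $R\,\overline{D_p}$ in a nonzero point, where $R=c^{1/p}\sqrt{D}$.

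First I would rescale by $R^{-1}$. Since $|u|^p+|v|^p$ is homogeneous of degree $p$ and $d(\cdot)$ is homogeneous of degree $2$, the lattice $\Lambda':=R^{-1}\Lambda$ has determinant $d(\Lambda')=R^{-2}D=c^{-2/p}$, \emph{independent of $M$}, and $\Lambda$ meets $R\,\overline{D_p}$ nontrivially if and only if $\Lambda'$ meets $\overline{D_p}$ nontrivially. As $M$ ranges over all nonsingular matrices, $\Lambda'$ ranges over all lattices of determinant $c^{-2/p}$ (given a target lattice of that determinant, take $M$ equal to one of its bases, for which $R=1$). Hence the inequality is solvable for every $M$ if and only if no lattice of determinant $c^{-2/p}$ is strictly admissible for $\overline{D_p}$.

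Next I would invoke the definition of the critical determinant. Writing $\delta_0:=c^{-2/p}$, the assertion ``no lattice of determinant $\delta_0$ is strictly admissible for $\overline{D_p}$'' holds precisely when $\delta_0\le\Delta(D_p)$: for $\delta_0<\Delta(D_p)$ every lattice of that determinant already has a nonzero point in the open body $D_p$; for $\delta_0=\Delta(D_p)$ the extremal lattices are admissible but meet the boundary $C_p$, hence still lie in $\overline{D_p}$, while all other lattices of this determinant have interior points; and for $\delta_0>\Delta(D_p)$ one may scale a critical lattice up to obtain a lattice of determinant $\delta_0$ avoiding $\overline{D_p}$ entirely. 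Translating $\delta_0\le\Delta(D_p)$ back through $\delta_0=c^{-2/p}$ yields $c\ge\Delta(D_p)^{-p/2}$, so the least admissible constant is $c=\Delta(D_p)^{-p/2}$, which is the asserted value of $\kappa_p$; combined with Theorem \ref{ggm} this makes the optimal constant fully explicit.

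The main obstacle I anticipate is the borderline analysis at $\delta_0=\Delta(D_p)$ and the attendant compactness issues, rather than the scaling, which is routine. Specifically, I must justify that the infimum defining $\Delta(D_p)$ is attained by a genuine critical lattice and that $\Delta(\overline{D_p})=\Delta(D_p)$; both follow from $D_p$ being a bounded, symmetric, convex (hence star) body, via Mahler's compactness and selection theorem, but they are exactly what make the closed-ball case and the ``$\le$'' in the inequality behave correctly. A secondary point requiring care is the perturbation argument for $\delta_0>\Delta(D_p)$, namely that a lattice of slightly supercritical determinant can be chosen to avoid even the boundary $C_p$; this rests on the continuity of admissibility under small scalings of a critical lattice.
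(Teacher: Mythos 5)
Your proof is correct, and it supplies precisely the argument the paper leaves implicit: the corollary appears with no written proof, being asserted as an immediate consequence of Theorem \ref{ggm} together with the classical correspondence, going back to Minkowski, between the optimal constant in the Diophantine inequality and the critical determinant of $D_p$ --- and that correspondence is exactly what you prove. Your chain (the homogeneity reduction $\Lambda \mapsto R^{-1}\Lambda$ with $d(R^{-1}\Lambda) = c^{-2/p}$, the equivalence of universal solvability with the non-existence of strictly $\overline{D_p}$-admissible lattices of that determinant, and the three-case comparison of $\delta_0 = c^{-2/p}$ against $\Delta(D_p)$) is the standard proof of this fact, so the paper's corollary and your argument are in substance the same reduction. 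Two remarks. First, you resolved a genuine ambiguity in the statement in the only way that makes it true: the $\kappa_p$ of the corollary cannot be the Gamma quotient defined earlier in the section (at $p=2$ that quotient equals $2/\sqrt{\pi}$, while $\Delta(D_2)^{-1} = 2/\sqrt{3}$), nor can it be the quantity $c^{1/p}$ in the normalization $c = \kappa_p^p$ (that reading would give exponent $-\tfrac{1}{2}$ rather than $-\tfrac{p}{2}$); it must denote, as you take it, the least admissible constant $c$ itself. Second, your appeal to Mahler's compactness theorem is heavier machinery than necessary: for the supercritical case $\delta_0 > \Delta(D_p)$ it suffices to scale up an admissible lattice of determinant strictly less than $\delta_0$, which exists by the very definition of the infimum, and attainment of the infimum is in any case given explicitly by Theorem \ref{ggm} through the critical lattices $\Lambda_p^{(0)}$ and $\Lambda_p^{(1)}$; likewise the boundary-point property of critical lattices needs only the discreteness-plus-boundedness shrinking argument. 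These are simplifications, not corrections; the proof stands as written.
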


\begin{corollary}

From theorem (\ref{ggm}) in notations \cite{GGM:PM,Gl4} we have next expressions for critical determinants and their lattices:  
\begin{enumerate} 

\item  ${\Delta^{(0)}_p} = \Delta(p, {\sigma_p}) =  \frac{1}{2}{\sigma}_{p},$ 

\item $ {\sigma}_{p} = (2^p - 1)^{1/p},$

\item  ${\Delta^{(1)}_p}  = \Delta(p,1) = 4^{-\frac{1}{p}}\frac{1 +\tau_p }{1 - \tau_p}$,   

\item  $2(1 - \tau_p)^p = 1 + \tau_p^p,  \;  0 \le \tau_p < 1.$  

\end{enumerate}
\end{corollary}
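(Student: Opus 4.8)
The plan is to extract the four formulas directly from Theorem~\ref{ggm} by identifying the two distinguished admissible configurations geometrically and computing their lattice determinants; the symbols $\Delta(p,1)$ and $\Delta(p,\sigma_p)$ denote the determinants of exactly these two configurations. First I would recall the classical fact that a critical lattice of the symmetric convex body $D_p$ carries at least three pairs of points on the boundary $C_p$, which together form a centrally symmetric hexagon inscribed in $C_p$. Since $C_p$ is invariant under the dihedral symmetry group of the square, the two families $\Lambda_p^{(0)}$ and $\Lambda_p^{(1)}$ are distinguished by whether a vertex of this hexagon sits on a coordinate axis (the Davis family) or on a diagonal (the Minkowski family), and I would treat each in turn.

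For $\Lambda_p^{(0)}$ I would impose that $(1,0)$ is a boundary lattice point and use the reflection $(x,y)\mapsto(x,-y)$, under which the remaining pair of lattice points must be $\left(\frac12,c\right)$ and $\left(\frac12,-c\right)$ with sum $(1,0)$. The boundary condition $2^{-p}+c^p=1$ then forces $c=\frac12(2^p-1)^{1/p}$, which is item~(2) written as $c=\frac{\sigma_p}{2}$ with $\sigma_p=(2^p-1)^{1/p}$, and the determinant $\abs{\frac12\cdot(-c)-c\cdot\frac12}=c$ of the lattice generated by these two points gives $\Delta_p^{(0)}=\frac12\sigma_p$, which is item~(1).

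For $\Lambda_p^{(1)}$ I would impose that $(-2^{-1/p},2^{-1/p})$ is a boundary lattice point and use the reflection $(x,y)\mapsto(y,x)$ across the diagonal. The hexagon is then generated by a pair $u=(\xi,\eta)$ and its mirror image $v=(\eta,\xi)$, and the diagonal boundary point is their difference $u-v=(\xi-\eta)(1,-1)$, which coincides with $(-2^{-1/p},2^{-1/p})$ exactly when $\eta-\xi=2^{-1/p}$. Introducing the normalization $\xi=2^{-1/p}\frac{\tau_p}{1-\tau_p}$ and $\eta=2^{-1/p}\frac{1}{1-\tau_p}$ (so that $\eta-\xi=2^{-1/p}$ holds automatically), the single remaining boundary condition $\xi^p+\eta^p=1$ becomes, after clearing the factor $(1-\tau_p)^p$, precisely $2(1-\tau_p)^p=1+\tau_p^p$, which is item~(4); and the determinant $\abs{\xi^2-\eta^2}=4^{-1/p}\frac{1+\tau_p}{1-\tau_p}$ gives item~(3).

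Finally I would match the two determinants against the branches of Theorem~\ref{ggm}: the value $\Delta(p,1)=\Delta_p^{(1)}$ realizes the critical determinant for $1\le p\le 2$ and $p\ge p_0$, while $\Delta(p,\sigma_p)=\Delta_p^{(0)}$ realizes it for $2\le p\le p_0$, the crossover being the Davis constant $p_0$ at which the two agree. The genuinely hard input is not the elementary algebra above but the justification that a critical lattice must have this symmetric three-boundary-point hexagonal form and that no admissible lattice does better; this is the content of Theorem~\ref{ggm} itself, which I am entitled to assume. Granting it, the corollary reduces to the explicit evaluation of these two lattice determinants and the verification that the defining relation for $\tau_p$ and the value of $\sigma_p$ are exactly those produced by placing the distinguished boundary points at $(1,0)$ and $(-2^{-1/p},2^{-1/p})$.
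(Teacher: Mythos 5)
Your derivation is correct: the algebra for both configurations checks out, and your identification of the two distinguished lattices matches the paper's $\Lambda_p^{(0)}$ (containing $(1,0)$) and $\Lambda_p^{(1)}$ (containing $(-2^{-1/p},2^{-1/p})$). But your route differs from the paper's, chiefly because the paper offers no argument at all here: the corollary is presented as a transcription of the notation and results of the cited works \cite{GGM:PM,Gl4}, and the closest the paper comes to a proof is the Minkowski--Cohn parametrization it reproduces later (in the proofs of Propositions \ref{p2} and \ref{p7}), namely $\Lambda = \{((1+\tau^{p})^{-1/p},\tau(1+\tau^{p})^{-1/p}),\,(-(1+\sigma^p)^{-1/p},\sigma(1+\sigma^p)^{-1/p})\}$ with $\Delta(p,\sigma)=(\tau+\sigma)(1+\tau^{p})^{-1/p}(1+\sigma^p)^{-1/p}$, from which items (1)--(4) follow by evaluating at the endpoints $(\sigma,\tau)=(\sigma_p,0)$ and $(\sigma,\tau)=(1,\tau_p)$ (using $1+\tau_p^p=2(1-\tau_p)^p$ to simplify $(1+\tau_p^p)^{-1/p}$). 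Your reconstruction via the dihedral symmetry of $C_p$ is equivalent to this --- your $v=(\eta,\xi)$ is exactly the parametrized basis vector at $\tau=\tau_p$, and $u-v=(-2^{-1/p},2^{-1/p})$ is the one at $\sigma=1$ --- but it is self-contained given the structural input, and it makes visible \emph{why} $\sigma_p$ and the defining equation for $\tau_p$ arise: they are forced by placing the hexagon vertices on the axis and on the diagonal respectively. One caveat: the reflection-invariance of the two lattices, which you use to pin down the hexagon's shape, is not literally "the content of Theorem \ref{ggm}" as you claim; it follows from the uniqueness clause stated after the corollary (an admissible lattice with three pairs of boundary points containing the distinguished point is unique, and its mirror image satisfies the same conditions, hence equals it). Since that uniqueness is itself imported from \cite{GGM:PM,Gl4} rather than proved in the paper, your attribution is misplaced but the logical standing of your argument is no weaker than the paper's own.
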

  For their critical lattices respectively  $\Lambda_{p}^{(0)},\; \Lambda_{p}^{(1)}$ next conditions satisfy:   $\Lambda_{p}^{(0)}$ and 
 $\Lambda_{p}^{(1)}$  are  two $D_p$-admissible lattices each of which contains
three pairs of points on the boundary of $D_p$  with the
property that 
\begin{itemize}

\item $(1,0) \in \Lambda_{p}^{(0)},$

\item $(-2^{-1/p},2^{-1/p}) \in \Lambda_{p}^{(1)},$

\end{itemize}
 (under these conditions the lattices are
uniquely defined).

\begin{example}
 Lattice $\Lambda_{p}^{(0)}$ are two-dimensional lattices in ${\mathbb R}^2$ spanned by the vectors
 \begin{itemize}
 \item[]  $\lambda^{(1)} = (1, 0),$
   \item[] $\lambda^{(2)} = (\frac{1}{2}, \frac{1}{2} \sigma_p).$
   \end{itemize}
   For lattices $\Lambda_{p}^{(1)}$, due to the cumbersomeness of the formulas for the coordinates of basis vectors of the
    lattices, as an example, we present only the lattice $\Lambda_{2}^{(1)}$.
    The lattice $\Lambda_{2}^{(1)}$ is a two-dimensional lattice in ${\mathbb R}^2$ spanned by the vectors
 \begin{itemize}
 \item[]  $\lambda^{(1)} = (-2^{-1/2},2^{-1/2}),$
   \item[] $\lambda^{(2)} = (\frac{\sqrt 6 - \sqrt 2}{4}, \frac{\sqrt 6 + \sqrt 2}{4}).$
   \end{itemize}
   \end{example}

\section{Minkowski curves, lattice points  and shells}

Minkowski~\cite{Mi:DA} also noted that the points of the critical lattice on the curve 
\begin{equation}
\label{mcp}
C_p: |x|^p + |y|^p = 1, p > 1
\end{equation}
form a regular hexagon. We will call (\ref{mcp}) Minkowski curves.

\subsection{Shells of critical lattices of Minkowski curves.}

\begin{lemma}
Let $(P_x, P_y)$ be a point of the critical lattice $\Lambda$ on Minkowski curve (\ref{mcp}).
Then the point $(u, v)$ that satisfies conditions
\begin{equation}
\label{es}
\left\{
                   \begin{array}{lc}
   |P_x v - P_y u| = \; d(\Lambda),\\
    |u|^p +  |v|^p = \; 1.\\
 \end{array}
                       \right.
\end{equation}
belongs to $\Lambda$ and   lies on (\ref{mcp}). 
The shell of points of the critical lattice $\Lambda_{p}$ on Minkowski curve (\ref{mcp}) contains 6 points
Point coordinates $(u, v)$ can be calculated in closed form or with any precision.
\end{lemma}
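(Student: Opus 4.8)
The plan is to realize the system (\ref{es}) as the intersection of the curve $C_p$ with the two lattice lines adjacent to the central line $\mathbb{R}P$, and then to show that on each such line the curve already passes through two lattice points, which by strict convexity forces these to be \emph{all} the intersection points. The combinatorial engine is the hexagon structure of the critical lattice recalled in the previous section, and the geometric engine is that $p>1$ makes $C_p$ strictly convex, so any line meets it in at most two points.

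First I would record that $P=(P_x,P_y)$ is a primitive vector of $\Lambda$: if $P=kW$ with $W\in\Lambda$ and integer $k\ge 2$, then $|W_x|^p+|W_y|^p=k^{-p}\bigl(|P_x|^p+|P_y|^p\bigr)=k^{-p}<1$, so the nonzero point $W$ lies in the open region $D_p$, contradicting $D_p$-admissibility of $\Lambda$. Hence $P$ extends to a basis $\{P,Q\}$, for which $|P_xQ_y-P_yQ_x|=d(\Lambda)$. For an arbitrary lattice point $(u,v)=mP+nQ$ one computes $P_xv-P_yu=n(P_xQ_y-P_yQ_x)=\pm n\,d(\Lambda)$, so the first equation of (\ref{es}) selects exactly those lattice points with $n=\pm1$; these fill the two lines $L_{\pm}=\{mP\pm Q:m\in\mathbb{Z}\}$ parallel to $\mathbb{R}P$.

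Next I would invoke the structure recalled above, namely that $\Lambda$ carries three pairs of points of $C_p$ forming a hexagon $\{\pm P,\pm Q,\pm(Q-P)\}$, with $Q$ chosen adjacent to $P$ so that $Q-P$ is again a boundary vertex. Since $Q=Q+0\cdot P$ and $Q-P=Q+(-1)\cdot P$ both lie on $L_{+}$ and on $C_p$, the line $L_{+}$ already meets $C_p$ in two lattice points; by strict convexity these are all of $L_{+}\cap C_p$. The same applies to $L_{-}$ with $-Q$ and $P-Q$. Consequently every solution $(u,v)$ of (\ref{es}) is one of $\pm Q,\pm(Q-P)$, hence belongs to $\Lambda$ and lies on (\ref{mcp}); together with $\pm P$ these exhaust the boundary points, so the shell has exactly $6$ points.

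Finally, for the closed-form statement I would solve (\ref{es}) directly: assuming $P_x\ne0$, the linear equation gives $v=(P_yu\pm d(\Lambda))/P_x$, and substitution into $|u|^p+|v|^p=1$ leaves a single one-variable equation, which is a quadratic with radical solutions when $p=2$ and a monotone transcendental equation on each branch for general $p$, hence solvable to arbitrary precision. I expect the main obstacle to be the counting step: it is not enough that the two neighboring lines meet the curve — one must know \emph{a priori} that two lattice points sit on each of them, which is precisely the hexagon property of the critical lattice, and it is only the combination of that fact with the strict convexity bound of two intersection points per line that excludes spurious non-lattice solutions of (\ref{es}).
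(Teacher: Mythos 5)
Your proof is correct, and it supplies an argument that the paper itself does not actually give: the paper's proof of this lemma consists of quoting Theorem 1.1 for the value of $d(\Lambda)$, citing Minkowski and Cassels for the six-point claim, and declaring ``the rest is obvious.'' Your route makes the substantive assertion --- that every \emph{real} solution $(u,v)$ of the system is a lattice point --- rigorous. The chain is sound: primitivity of $P$ (a proper divisor $W$ of $P$ would satisfy $|W_x|^p+|W_y|^p=k^{-p}<1$, contradicting admissibility), hence $P$ extends to a basis and the first equation of the system cuts out exactly the two lattice lines adjacent to ${\mathbb R}P$; the classical hexagon structure of critical lattices of symmetric convex bodies (the same Minkowski/Cassels fact the paper cites, in the sharpened form that the six boundary points are $\pm P,\pm Q,\pm(Q-P)$ for a basis $\{P,Q\}$) places two lattice points of $C_p$ on each of these lines; and strict convexity of $C_p$ for $p>1$ caps each line's intersection with the curve at two points, so no non-lattice solutions can exist. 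The one step you pass over quickly is that an arbitrary shell point $P$ can always be taken as a ``basis vertex'' of the hexagon: if the hexagon is $\pm a,\pm b,\pm(a+b)$ and $P=a+b$, one must switch to the basis $\{a+b,\,b\}$, under which the remaining vertices again sit at lattice height $\pm 1$ --- a routine but worth-stating change of basis. In short, both arguments rest on the same classical input, but yours converts the paper's appeal to references and to ``obviousness'' into an actual proof, and in doing so it correctly isolates the two ingredients (hexagon combinatorics plus the two-point line-intersection bound from strict convexity) that make the system characterize the neighboring shell points.
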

{\bf Proof}. From theorem \ref{ggm} we have the value of $d(\Lambda)$. From \cite{Mi:DA,Cassels}  and also explicitly  from (\ref{es}) follows that the shell contains 6 points.
The rest is obvious.
\begin{example}
Each shell of points of the critical lattices $\Lambda_{p}^{(0)}$ on Minkowski curves (\ref{mcp}) contain 6 points:
\[
  \pm (1, 0), 
  \]
  \[
  \left(\pm \frac{1}{2},  \pm \frac{1}{2} \sigma_p \right).
\]
  
Respectively the shell of points of the critical lattice $\Lambda_{2}^{(1)}$ on Minkowski curves (\ref{mcp}) contains 6 points:
\[
  (-2^{-1/2},2^{-1/2}), (2^{-1/2},-2^{-1/2}),  
  \]
  \[
  \pm \left(\frac{\sqrt 6 + \sqrt 2}{4}, \frac{\sqrt 6 - \sqrt 2}{4}\right), 
  \]
  \[
    \pm \left(\frac{\sqrt 6 - \sqrt 2}{4}, \frac{\sqrt 6 + \sqrt 2}{4} \right).
\]
\end{example}

\begin{remark}
The only integer quadratic forms of the lattices $\Lambda_{p}^{(0)}$ and $\Lambda_{p}^{(1)}$ are the quadratic forms corresponding to the lattices
$\Lambda_{2}^{(0)}$    and $\Lambda_{2}^{(1)}$.
These forms are the  form  
\begin{equation}
 x^2 - xy +y^2
 \end{equation}
 for the  lattice $\Lambda_{2}^{(0)}$ with the base $\lambda^{(1)} = (1, 0), \lambda^{(2)} = (-\frac{1}{2}, \frac{1}{2} \sqrt 3)$  and the form 
 \begin{equation}
 \label{ql21}
 x^2 -xy +y^2
\end{equation}
 for the  lattice $\Lambda_{2}^{(1)}$ with the base
  $\lambda^{(1)} = (-2^{-1/2},2^{-1/2}), \lambda^{(2)} = (\frac{\sqrt 6 + \sqrt 2}{4}, \frac{\sqrt 6 - \sqrt 2}{4})$ .
 \end{remark}

For these critical lattices, it is easy to  calculate from the corresponding theta function, their 
shells for the natural numbers.
 Corresponding theta series \cite{CS,Serre} for $\Lambda_{2}^{(1)}$ with
$q=\exp(\pi i \tau)$  is defined as
\begin{equation}
 \Theta_{\Lambda_{2}^{(1)}}(\tau) = \sum_{\lambda \in \Lambda_{2}^{(1)}} q^{||\lambda||_2} 
= \sum_{x, y = -\infty}^{\infty} q^{(x^2 - xy + y^2)}  = \sum_{m=0}^{\infty} N_{m} q^{m}.
\end{equation}
 where $N_m$ is the number of times (\ref{ql21}) represents $m, N_0 = 1$.
 Thus
 \[
   \Theta_{\Lambda_{2}^{(1)}}(\tau) = 1 + 6q + 6q^{3} + 6q^{4}
    + 12q^{7} +  6q^{9} + 6q^{12} +  \cdots
 \]

\subsection{Strictly convex curves and smooth algebraic curves from   moduli spaces}

For $1 < p < \infty$ Minkowski curves are strictly convex curves.
\begin{remark}
The length $\ell$ of the Minkowski curve for $p > 1$ is
 \[
 {\ell} = 4 \int_0^1 (1 - y^p)^{\frac{1}{p}} dy
   \]
 \end{remark}
 
 \begin{remark}
As $\ell > 3$ by results of the Jarnic \cite{jar} the curves $C_p$ and for integer $N > 0$ their $N$-fold magnifications 
$N\cdot C_p$ about the origin for corresponding length $\ell $ contains at most 
\[
 3(4\pi)^{-\frac{1}{3}} {\ell}^{\frac{2}{3}}  + {\mathcal O}({\ell}^{\frac{1}{3}})
\]
integral lattice points.
\end{remark}
\begin{remark}
The curve $C_p$ for real $p > 1$ is three times continuity differentiable. From results of  Swinnerton–Dyer \cite{sdy} follow that the number of rational points $(\frac{m}{N},\frac{n}{N})$ on $C_p$ (the number of integral points on $N\cdot C_p$) as $N \to \infty$ is estimatrd by
\[
 |N \cdot C_p \cap {\mathbb Z}^2| \le c(C_p, \epsilon)N^{\frac{3}{5} + \epsilon}.
\]
Let $y = f(x)$ be the arc $A$ of (\ref{mcp}) and $f(x)$ be a transcendental analytic function on $\left[0, 1\right]$.
Then   
\[
|tA \cap {\mathbb Z}^2| \le c(f,\epsilon) t^{\epsilon}.
\]
It follows from  Bombieri-Pila formula\cite{bop}.
 \end{remark}
\subsubsection{Minkowski algebraici  curves}
(\cite{GLp,Gplav,GDAV} and references therein)\\
 For natural $p, d, p = 2d$ the curves 
 \[
 C_{2d}: x^{2d} + y^{2d} = 1
 \]
  are algebraic curves and their projective models have for $d > 1$  the genus $g = 
  (2d - 1)(d - 1)$.
  \begin{remark}
    Over complex numbers ${\mathbb C}$ the polynomial $ p(x,y) = x^{2d} + y^{2d} - 1 \in {\mathbb C}[x,y]$ 
    defines the projective curve of the form $ x^{2d} + y^{2d} - z^{2d} = 0$ in the projective space
    ${\mathbb C}{\mathbb P}^2$. The solution $y$ of $p(x,y) = 0$ is a  many-valued function  of $x$ and 
    $ C_{2d}$ is its  Riemann surface.
  \end{remark}
  \begin{remark}
 For algebraic curve   $C_{2d}({\mathbb C})$ its   Riemann surface is oriented sphire with $g$ handles. 
 The function field ${\mathbb F}$ on the  Riemann surface is the field of rational functions of the genus $g$. 
If two such fields are isomorphic then the corresponding algebraic curves are birationaly isomorphic.
   \end{remark}
   Let $K$ be the field of algebraic numbers and  $C_{2d}(K)$ be the projective curve over $K$. Let $\mathcal A$ be an analytic subset of $C_{2 d}({\mathbb C})$.
   By results of Bombieri-Pila \cite{bop} in the form of Gasbarri \cite{gas} we have:
  \begin{remark} 
    Let $(C_{2d}, {\mathcal L})$ be a projective variety over $K$ with ample line bundle ${\mathcal L}$ over $C_{2d}$ and let 
    $h_{\mathcal L}(\cdot)$ be a height function assosiated to it.
    Let $\mathcal A$ be a non compact Riemann surface. For relatively compact open set $U \subset {\mathcal A}$ and for
    holomorphic map $\varphi: \mathcal A \to  C_{2 d}({\mathbb C})$ with Zariski dense  set $\varphi(\mathcal A)$ and the set 
    $S_U(T) = \{ z \in U | \varphi(z) \in  C_{2 d}(K)\}$ and $h_{\mathcal L}(\varphi(z)) \le T$ for cardinality $\#S_U(T)$of the set 
    $S_U(T)$ and for any positive number $\epsilon$ we have 
    \[
    \#S_U(T) \ll \exp (\epsilon T)
    \]
    where the involved constants depend on $U, \varphi$,  and ${\mathcal L}$ but not on $T$ .
   \end{remark}

  Another class of algebraic curves, namely, the class of elliptic curves, arises from the complexification of admissible lattices.
These lattices form the class of admissible lattices $\Lambda$ having three pairs of points on the boundary $C_p$  of 2-dimension
region $ D_p$.
  We will consider this case elsewhere.
  

\section{Minkowski balls and the density  of the packing of $2$-dimensional unit Minkowski balls}
\label{Minkowski balls}
Recall the Davis constant $p_{0} \in {\mathbb R}$, satisfying $2,57 < p_{0}  < 2,58$. 
We consider balls of the form
\[
   D_p: \;  |x|^p + |y|^p \le 1, \; p \ge 1,
\]
and call such balls with $1<p<2$ {\it Minkowski balls} and, correspondingly, the circles defined by
\[ |x|^p + |y|^p = 1, \; 1 < p < 2 \]

are called {\it Minkowski circles}. Continuing this, we consider the following classes of balls and circles.
\begin{itemize}
  
\item  {\it Limit Minkowski circles}:    $|x| + |y| = 1$;

\item {\it Davis balls}: $|x|^p + |y|^p \le 1$  for $p_{0} > p \ge 2$;

\item {\it Davis circles}:  $|x|^p + |y|^p = 1$  for $p_{0} > p \ge 2$;

\item {\it Chebyshev-Cohn balls}: $|x|^p + |y|^p \le 1$  for $ p \ge p_{0}$;

\item {\it Chebyshev-Cohn circles}: $|x|^p + |y|^p = 1$  for $ p \ge p_{0}$;

\item {\it Limit Chebyshev-(Cohn) balls}: $||x, y||_{\infty} = \max(|x|,|y|)$.
 
\end{itemize} 

Recall the definition of a packing lattice \cite{via,ckmrv,Cassels,lek}.
We will give it for $n$-dimensional Minkowski balls $D_p^n$ in ${\mathbb R}^n$.

\begin{definition}
 Let $\Lambda$ be a full lattice in ${\mathbb R}^n$ and $a \in {\mathbb R}^n$. 
 In the case if it is occurs that no two  of balls $\{D_p^n + b, b \in \Lambda + a \}$ have inner points in common, the collection of balls 
   $\{D_p^n + b, b \in \Lambda + a \}$ is called a $(D_p^n,  \Lambda)$-packing, and $\Lambda$ is called a packing lattice of $D_p^n$.
\end{definition}

Recall also that if $\alpha \in {\mathbb R}$ and $D_p^n$ is a ball than $\alpha D_p^n$ is the set of points $\alpha x, x \in D_p^n$.
 
In some cases we will consider interiors of balls $D_p =  D_p^2$ (open balls) which we will denoted as $ID_p$.

Denote by $V(D_p)$ the volume (area) of $D_p$.

\begin{proposition} 
 The volume of  Minkowski ball $D_p$ is equal $4 \frac{(\Gamma(1 + \frac{1}{p}))^2}{\Gamma(1 + \frac{2}{p})}$.
\end{proposition}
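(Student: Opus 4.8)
The plan is to compute the area
\[
V(D_p) = \iint_{|x|^p + |y|^p \le 1} dx\, dy
\]
directly and recognize the result as a Dirichlet-type Beta integral. By the symmetry of $D_p$ under $(x,y) \mapsto (\pm x, \pm y)$, the region splits into four congruent copies, so it suffices to evaluate the integral over the first quadrant and multiply by $4$; this accounts for the leading factor in the claimed formula. First I would reduce to the first-quadrant integral $\iint_{x,y \ge 0,\ x^p + y^p \le 1} dx\, dy$, which already makes the target $4 B$ for a single positive integral $B$.

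Next I would substitute $u = x^p$, $v = y^p$, i.e. $x = u^{1/p}$, $y = v^{1/p}$, which maps the first-quadrant region to the standard simplex $u, v \ge 0$, $u + v \le 1$. The Jacobian is $dx\, dy = \frac{1}{p^2} u^{1/p - 1} v^{1/p - 1}\, du\, dv$, so the integral becomes
\[
B = \frac{1}{p^2} \iint_{u,v \ge 0,\ u+v \le 1} u^{1/p - 1} v^{1/p - 1}\, du\, dv,
\]
which is a Dirichlet integral over the simplex. The key step is then to invoke the classical Dirichlet integral formula
\[
\iint_{u,v\ge 0,\ u+v\le 1} u^{a-1} v^{b-1}\, du\, dv = \frac{\Gamma(a)\,\Gamma(b)}{\Gamma(a+b+1)},
\]
applied with $a = b = 1/p$. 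Substituting gives $B = \frac{1}{p^2}\cdot \frac{\Gamma(1/p)^2}{\Gamma(2/p + 1)}$.

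To finish, I would convert the factors of $1/p$ into the shifted Gamma values appearing in the statement using the functional equation $\Gamma(z+1) = z\,\Gamma(z)$. Indeed $\frac{1}{p}\Gamma(1/p) = \Gamma(1 + 1/p)$, so $\frac{1}{p^2}\Gamma(1/p)^2 = \Gamma(1 + 1/p)^2$, and the denominator $\Gamma(2/p + 1) = \Gamma(1 + 2/p)$ is already in the desired form. Therefore $B = \frac{\Gamma(1 + 1/p)^2}{\Gamma(1 + 2/p)}$ and $V(D_p) = 4B$, as asserted.

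The routine computations are standard, so the only genuine obstacle is the invocation of the Dirichlet simplex integral; rather than derive it from scratch one may cite it, or equivalently recognize $B$ as a Beta-function integral after the inner integration. I would note the clean cross-check at $p = 2$, where the formula yields $4\,\Gamma(3/2)^2/\Gamma(2) = 4\cdot(\sqrt{\pi}/2)^2 = \pi$, the area of the Euclidean unit disk, and at $p = 1$, where it yields $4\,\Gamma(2)^2/\Gamma(3) = 4/2 = 2$, the area of the unit diamond $|x| + |y| \le 1$.
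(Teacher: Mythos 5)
Your proof is correct and follows essentially the same route as the paper's: reduce to the first quadrant by symmetry, substitute $\xi = x^p$, $\eta = y^p$ to obtain the Dirichlet integral $\frac{1}{p^2}\iint_{\xi+\eta\le 1} \xi^{1/p-1}\eta^{1/p-1}\,d\xi\,d\eta$ over the simplex, and express it via Gamma functions. You merely spell out the steps the paper leaves implicit (the explicit Dirichlet--Beta formula, the reduction $\frac{1}{p}\Gamma(1/p)=\Gamma(1+1/p)$, and the checks at $p=1,2$), which is a welcome addition but not a different argument.
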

\begin{proof} (by Minkowski). Let $x^P + y^p \le 1, x \ge 0, y \ge 0.$
Put $x^p = \xi, y^p = \eta.$
\begin{equation}
 \label{vdp}
V(D_p) = \frac{4}{p^2}\iint \xi^{\frac{1}{p} - 1}\eta^{\frac{1}{p} - 1} d\xi d\eta,
\end{equation}
where the integral extends to the area
\[
  \xi + \eta \le 1, \; \xi \ge 0, \; \eta \ge 0.
\]
 Expression (\ref{vdp}) can be represented in terms of Gamma functions, and we get
\[
 V(D_p) = 4 \frac{(\Gamma(1 + \frac{1}{p}))^{2}}{\Gamma(1 + \frac{2}{p})}.
\]
\end{proof}

From the considerations of Minkowski and other authors \cite{Mi:DA,Mi:GZ,Cassels,lek,Gl4}, the following statements can be deduced 
(for the sake of completeness, we present the proof of Proposition \ref{p1}).\\

\begin{proposition} \cite{Cassels,lek}.
\label{p1}
A lattice $\Lambda$ is a packing lattice of $D_p$ if and only if  it is admissible lattice for $2 D_p$.
\end{proposition}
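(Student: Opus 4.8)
The plan is to convert the packing condition into a statement about nonzero lattice vectors lying in the dilated ball $2D_p$, using only the central symmetry and convexity of $D_p$.

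First I would unpack the definition of a packing lattice. Since the difference of any two elements of $\Lambda + a$ is itself a vector of $\Lambda$, the packing condition does not depend on the shift $a$, and $\Lambda$ is a packing lattice of $D_p$ exactly when, for every pair of distinct $b_1, b_2 \in \Lambda$, the translates $D_p + b_1$ and $D_p + b_2$ have no interior point in common. As the interior of $D_p + b$ is $ID_p + b$, these two translates meet in an interior point precisely when there exist $x_1, x_2 \in ID_p$ with $x_1 + b_1 = x_2 + b_2$, that is, when $b_2 - b_1 \in ID_p - ID_p$.

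The geometric heart of the argument is the identity $ID_p - ID_p = 2\,ID_p$. Because $D_p$ is centrally symmetric we have $-ID_p = ID_p$, so $ID_p - ID_p = ID_p + ID_p$. Convexity yields $ID_p + ID_p \subseteq 2\,ID_p$: for $x, y \in ID_p$ the midpoint $\tfrac12(x+y)$ again lies in $ID_p$, whence $x + y \in 2\,ID_p$. The opposite inclusion follows from writing $2z = z + z$. Consequently the two translates overlap in their interiors if and only if $b_2 - b_1 \in 2\,ID_p$.

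Finally I would note that, as $b_1, b_2$ range over all distinct lattice vectors, the difference $b_2 - b_1$ ranges over exactly the nonzero vectors of $\Lambda$. Hence $\Lambda$ is a packing lattice of $D_p$ if and only if no nonzero lattice vector lies in the open set $2\,ID_p$, i.e.\ $\Lambda \cap 2\,ID_p = \{0\}$. Since the region $2 D_p$ entering the admissibility definition is the open set $|x|^p + |y|^p < 2^p$, this is precisely the assertion that $\Lambda$ is admissible for $2D_p$, and the equivalence follows. The only delicate point is the bookkeeping of the open versus closed versions of the ball, so that the strict inequality defining $2D_p$ is matched to the open difference set $ID_p - ID_p$; the convexity identity itself, though it is the crux, is routine.
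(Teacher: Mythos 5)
Your proof is correct and follows essentially the same route as the paper's: both arguments reduce overlap of translates $ID_p + b_1$, $ID_p + b_2$ to the difference vector $b_2 - b_1$ lying in $ID_p - ID_p = 2\,ID_p$, with central symmetry and the convexity midpoint argument supplying that identity (your two inclusions correspond exactly to the paper's two contrapositive directions, where the point $\tfrac12 a$ plays the role of your decomposition $2z = z + z$). Your version is organized as a single chain of equivalences and additionally handles the translation vector $a$ in the packing definition, which the paper silently takes to be $0$, but the mathematical content is the same.
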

\begin{proof}  (contrary proof). First, note that one can take an open ball $ID_p$ and use the notion of strict admissibility.
 Suppose that $\Lambda$ is not strictly admissible for $2 ID_p$. Then $2 ID_p$ contains a point $a \ne 0$ of $\Lambda$.
Then the two balls $ID_p$ and $ID_p +a$ contain the point $\frac{1}{2} a$  in common. So $\Lambda$ is not a packing
 lattice of $D_p$.\\
 Suppose now that $\Lambda$ is not a packing  lattice of $D_p$. Then there exist two distinct points $b_1, b_2 \in \Lambda$ and a point  $c$ such that $c \in ID_p + b_1 $ and $c \in ID_p + b_2 $. 
 Hence there are points $a_1, a_2 \in ID_p$ such that $c = a_1 + b_1 = a_2 + b_2$. So $b_1 - b_2 = a_2 - a_1 \in ID_p$, whereas 
 $b_1 - b_2 \ne 0$ and $b_1 - b_2 \in \Lambda$. Therefore $\Lambda$ is not (strictly) admissible lattice.    
\end{proof}

\begin{proposition} \cite{Cassels,lek}.
\label{p3}
 The dencity of a $(D_p, \Lambda)$-packing is equal to $V(D_p)/d(\Lambda)$ and it is maximal if $\Lambda$ is critical for  $2 D_p$.
\end{proposition}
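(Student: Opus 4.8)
The plan is to argue in two stages: first to identify the packing density with the volume ratio $V(D_p)/d(\Lambda)$ by a fundamental-domain counting argument, and then to optimize this ratio using Proposition \ref{p1} together with the definition of the critical determinant.

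First I would recall that the density of a lattice packing $\{D_p + b : b \in \Lambda\}$ is defined as the limit, as $R \to \infty$, of the fraction of a large square $[-R,R]^2$ that is covered by the translated balls. Fix a fundamental parallelogram $P$ of $\Lambda$; its area is $d(\Lambda)$ and the translates $\{P + b : b \in \Lambda\}$ tile the plane, each containing exactly one point of $\Lambda$. In a $(D_p,\Lambda)$-packing the balls $D_p + b$ have pairwise disjoint interiors, so the covered area is additive. The number of points of $\Lambda$ lying inside $[-R,R]^2$ is $(2R)^2/d(\Lambda) + O(R)$, since the number of fundamental cells meeting the boundary of the square grows only linearly in $R$. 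Each such point contributes a ball of area $V(D_p)$, so the covered area inside the square is $V(D_p)\cdot\big((2R)^2/d(\Lambda) + O(R)\big)$. Dividing by $(2R)^2$ and letting $R \to \infty$, the boundary contribution is negligible and the density equals $V(D_p)/d(\Lambda)$.

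For the maximality, I would observe that $V(D_p)$ is a fixed constant independent of the choice of packing lattice. Hence maximizing the density $V(D_p)/d(\Lambda)$ over all packing lattices $\Lambda$ of $D_p$ is equivalent to minimizing $d(\Lambda)$ over the same class. By Proposition \ref{p1}, a lattice is a packing lattice of $D_p$ if and only if it is admissible for $2D_p$; thus the relevant quantity is $\inf\{d(\Lambda) : \Lambda \text{ admissible for } 2D_p\}$, which is by definition the critical determinant $\Delta(2D_p)$. This infimum is attained precisely by the lattices critical for $2D_p$, since such a lattice satisfies $d(\Lambda) = \Delta(2D_p)$ by definition. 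Therefore the density is maximal exactly when $\Lambda$ is critical for $2D_p$, and its maximal value is $V(D_p)/\Delta(2D_p)$.

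The main obstacle is the first stage: one must make the boundary-effect estimate rigorous, i.e.\ verify that the balls straddling the edges of the square $[-R,R]^2$ contribute only an $O(R)$ error to the covered area. Since $D_p$ is bounded, being contained in a fixed disc, the set of lattice points whose ball meets the boundary of the square lies within a bounded neighbourhood of that boundary, so the count of such points is $O(R)$; this is routine but is precisely where the limit in the definition of density genuinely has to be controlled. The remainder of the argument is then purely formal, following from Proposition \ref{p1} and the definition of $\Delta(2D_p)$.
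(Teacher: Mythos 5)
Your argument is correct, but note what it is being compared against: the paper itself offers no proof of this proposition. It is stated with the citation \cite{Cassels,lek}, and the surrounding text says explicitly that only Proposition \ref{p1} is proved ``for the sake of completeness.'' So you have supplied what the paper delegates to the literature, and what you wrote is essentially the standard textbook argument found there. Both stages are sound: the fundamental-domain count $(2R)^2/d(\Lambda)+O(R)$, with the $O(R)$ boundary correction that you rightly single out as the only delicate point (controlled because $D_p$ is bounded), gives the density $V(D_p)/d(\Lambda)$; and the reduction of maximality to minimizing $d(\Lambda)$ over packing lattices, via Proposition \ref{p1} and the definition of $\Delta(2D_p)$ as the infimum of determinants of $2D_p$-admissible lattices, is exactly the intended route to the second claim. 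Two small remarks. First, the paper's definition of a $(D_p,\Lambda)$-packing allows a translated system $\Lambda+a$; this changes nothing in your counting argument, but it deserves a sentence. Second, your concluding ``exactly when'' is a slight strengthening of the stated ``if''; it is nevertheless justified, since a packing lattice of maximal density is admissible for $2D_p$ by Proposition \ref{p1}, hence has determinant at least $\Delta(2D_p)$, and maximality forces equality, i.e.\ criticality.
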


\section{On packing  Minkowski, Davis and  Chebyshev-Cohn unit balls on the plane}
\label{secpmdccballs}
Let us consider possible optimal lattice packings of these balls and their connection with critical lattices.

At first give lattices of trivial optimal lattice packings for the limiting  (asymptotic) cases at the points $p = 1$ and $\infty$ "infinity" (the latter corresponds to the classical Chebyshev balls) and as the introductory example the optimal packing of two-dimensional unit balls.

\begin{proposition}
\label{lp2}
The lattice 
\[
  \Lambda_{1}^{(1)} =  \{(\frac{1}{2}, \frac{1}{2}), (0, 1)\}
\]
is the critica lattice for $D_1$. 
Limiting case of Minkowski balls  for $ p = 1$ gives the optimal sublattice of index two of the lattice $\Lambda_{1}^{(1) }$-lattice  packing with the density $1$.
The centers of the Minkowski balls in this case are  at the vertices of the sublattice of index two of the lattice $\Lambda_{1}^{(1) }$.
\end{proposition}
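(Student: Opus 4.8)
The plan is to check three claims in order: that $\Lambda_1^{(1)}$ is critical for $D_1$, that its index-two sublattice $2\Lambda_1^{(1)}$ realizes the optimal packing, and that the corresponding diamonds tile the plane with density $1$. For criticality I would first assemble the data $d(\Lambda_1^{(1)})=|\tfrac12\cdot1-\tfrac12\cdot0|=\tfrac12$ and, from the volume formula, $V(D_1)=4(\Gamma(2))^2/\Gamma(3)=2$; then I would identify $\Delta(D_1)$ by specializing Theorem \ref{ggm} to $p=1$, where $1\le p\le2$ gives $\Delta(D_1)=\Delta^{(1)}_1$ and the relation $2(1-\tau_1)=1+\tau_1$ forces $\tau_1=\tfrac13$, so that $\Delta(D_1)=4^{-1}\tfrac{1+\tau_1}{1-\tau_1}=\tfrac12$. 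It remains to verify admissibility: a general point of $\Lambda_1^{(1)}$ is $(\tfrac m2,\tfrac m2+n)$ with $m,n\in\mathbb{Z}$, of $\ell^1$-norm $\tfrac{|m|}2+|\tfrac m2+n|$, and a short case check over $m=0,\pm1,\pm2$ and $|m|\ge2$ shows this is $\ge1$ for $(m,n)\ne(0,0)$, so no nonzero lattice point lies in the open diamond. Since $\Lambda_1^{(1)}$ is admissible and $d(\Lambda_1^{(1)})=\Delta(D_1)$, it is critical for $D_1$; as a self-contained alternative to Theorem \ref{ggm} one may invoke Minkowski's convex body theorem, which yields $\Delta(D_1)\ge V(D_1)/4=\tfrac12$ with equality here because $D_1$ tiles the plane.

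For the packing claim I would invoke Propositions \ref{p1} and \ref{p3}: a packing lattice of $D_1$ is exactly an admissible lattice for $2D_1$, and the density $V(D_1)/d(\Lambda)$ is maximal for a lattice critical for $2D_1$. Scaling gives $\Delta(2D_1)=4\,\Delta(D_1)=2$, and this value is attained by $2\Lambda_1^{(1)}$---the index-two sublattice of the statement, spanned by $(1,1)$ and $(0,2)$ and consisting of all $(a,b)$ with $a\equiv b\pmod2$---whose minimal $\ell^1$-norm equals $2$, so it is strictly admissible for $2D_1$. Hence the density is $V(D_1)/\Delta(2D_1)=2/2=1$, and placing the diamonds $D_1$ at the points of $2\Lambda_1^{(1)}$ produces an edge-to-edge tiling along the lines $x\pm y\in2\mathbb{Z}+1$; this both reconfirms density $1$ and exhibits the centers at the vertices of the index-two sublattice, as asserted.

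The one genuinely delicate point is that $p=1$ is a degenerate, non-strictly-convex limit: $D_1$ is a polygon, so the hexagon lemma does not apply verbatim and the contact set of $\Lambda_1^{(1)}$ with $\partial D_1$ comprises eight points---the four vertices $(\pm1,0),(0,\pm1)$ and the four edge-midpoints $(\pm\tfrac12,\pm\tfrac12)$---rather than six. I expect handling this loss of strict convexity to be the main obstacle, and it is precisely the use of Theorem \ref{ggm} at the endpoint $p=1$ (equivalently, the equality case of Minkowski's theorem) that makes the criticality assertion rigorous despite the absence of a unique strictly convex hexagon of contact.
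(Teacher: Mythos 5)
Your proof is correct and rests on the same skeleton as the paper's: $\Lambda_1^{(1)}$ is $D_1$-admissible with $d(\Lambda_1^{(1)}) = \Delta(D_1) = \frac{1}{2}$, hence critical, and the doubled lattice then gives the density-$1$ packing. What you do differently is fill in everything the paper leaves implicit. The paper's proof asserts admissibility and the value $\Delta(D_1)=\frac{1}{2}$ outright, and obtains density $1$ purely from the remark that the $p=1$ balls are congruent squares, i.e.\ from the tiling; you verify admissibility by the explicit case check on $(\frac{m}{2},\frac{m}{2}+n)$, derive $\Delta(D_1)=\frac{1}{2}$ from Theorem \ref{ggm} (via $\tau_1=\frac{1}{3}$) with the equality case of Minkowski's convex-body theorem as a backup, and get optimality from the packing-lattice criterion (Proposition \ref{p1}) and the density formula (Proposition \ref{p3}) together with the scaling $\Delta(2D_1)=4\Delta(D_1)=2$, recovering the tiling as confirmation. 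Your closing caveat about the degeneracy at $p=1$ (eight contact points of the critical lattice with $\partial D_1$ rather than six) is a genuine point that the paper passes over in silence. Two small cautions, both inherited from the paper rather than errors of yours: (i) the lattice that actually realizes the packing, $2\Lambda_1^{(1)}$ spanned by $(1,1)$ and $(0,2)$, has index \emph{four} in $\Lambda_1^{(1)}$, not two --- an honest index-two sublattice would have determinant $1$ and would give the impossible packing density $2$ for a body of area $2$ --- so the phrase ``index two'' in the statement, in Proposition \ref{p2} and in Theorem \ref{opb} must be read as ``the scaled sublattice $2\Lambda$''; (ii) since the minimal $\ell^1$-norm of $2\Lambda_1^{(1)}$ is exactly $2$, that lattice has points on the boundary of $2D_1$, so it is admissible for the \emph{open} doubled ball $2ID_1$ rather than strictly admissible for the closed one, which is exactly the hypothesis Proposition \ref{p1} needs.
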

{\bf Proof.} Recall that a critical lattice for $D_1$ is a lattice $\Lambda$ which is $D_1$-admissible and which has determinant 
$d(\Lambda) = \Delta(D_1)$. The lattice $\Lambda_{1}^{(1)}$ is $D_1$-admissible.   We have $\Delta(D_1) = \frac{1}{2}$ and $d(\Lambda_{1}^{(1)}) = \frac{1}{2}$. Minkowski balls  for $ p = 1$ are congruent squares. Hance we have the optimal 
the sublattice of index two of the lattice $\Lambda_{1}^{(1) }$  packing of the squares with the density $1$.

  \begin{proposition}
\label{lp4}
  The lattice 
\[
  \Lambda_{\infty}^{(1)} =  \{(1, 1), (0, 1)\}
 \]
is the critica lattice for $D_{\infty}$. 
Limiting case of Minkowski balls  for $ p = \infty$ gives  the optimal  of the  density $1$ packing 
 with respect to the sublattice of index two of the critical lattice $ \Lambda_{\infty}^{(1)}$.
 The centers of the Minkowski balls in this case are at the vertices of  the sublattice of index two of the lattice $\Lambda_{\infty}^{(1)}$.
  \end{proposition}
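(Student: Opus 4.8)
The plan is to mirror the proof of Proposition~\ref{lp2}: first confirm that $\Lambda_\infty^{(1)}$ is a critical lattice for $D_\infty$, and then exhibit the tiling that realizes the packing. Here $D_\infty$ is the limiting body $\max(|x|,|y|)\le 1$, that is the closed square $[-1,1]^2$, whose interior $ID_\infty$ is the open square $(-1,1)^2$ of area $4$. The generators $(1,1)$ and $(0,1)$ span precisely the integer lattice, so $\Lambda_\infty^{(1)}=\mathbb{Z}^2$, with
\[
d(\Lambda_\infty^{(1)})=\left|\det\begin{pmatrix}1&1\\0&1\end{pmatrix}\right|=1 .
\]
First I would check admissibility: since $\mathbb{Z}^2\cap(-1,1)^2=\{(0,0)\}$, the lattice $\Lambda_\infty^{(1)}$ is (strictly) admissible for $D_\infty$; the boundary points $(\pm1,0),(0,\pm1),(\pm1,\pm1)$ of $\mathbb{Z}^2$ lie on $\partial D_\infty$ and so do not violate admissibility, which is defined for the open body.

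Next I would compute the critical determinant $\Delta(D_\infty)=1$. The cleanest route is Minkowski's convex body theorem: were there an admissible lattice with $d(\Lambda)<1$, then $\mathrm{vol}(ID_\infty)=4>4\,d(\Lambda)=2^2 d(\Lambda)$ would force a nonzero point of $\Lambda$ into the open square, contradicting admissibility. Hence every admissible lattice has $d(\Lambda)\ge 1$, so $\Delta(D_\infty)\ge 1$, while the admissible $\Lambda_\infty^{(1)}$ gives $\Delta(D_\infty)\le 1$; thus $\Delta(D_\infty)=1=d(\Lambda_\infty^{(1)})$ and $\Lambda_\infty^{(1)}$ is critical. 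Alternatively one may pass to the limit $p\to\infty$ in Theorem~\ref{ggm}: for $p\ge p_0$ one has $\Delta(D_p)=\Delta^{(1)}_p=4^{-1/p}\frac{1+\tau_p}{1-\tau_p}$, and $4^{-1/p}\to 1$, $\tau_p\to 0$ give $\Delta^{(1)}_p\to 1$; this requires a continuity argument for $\Delta(D_p)\to\Delta(D_\infty)$, which the direct bound sidesteps.

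Finally, for the packing I would invoke Proposition~\ref{p1}, so that a packing lattice of $D_\infty$ is an admissible lattice for $2D_\infty=[-2,2]^2$, and Proposition~\ref{p3}, giving density $V(D_\infty)/d(\Lambda)$, maximal at a critical lattice of $2D_\infty$. By the scaling $\Delta(2D_\infty)=4\Delta(D_\infty)=4$, this critical lattice is the sublattice $2\Lambda_\infty^{(1)}=2\mathbb{Z}^2$, spanned by $(2,2)$ and $(0,2)$, of determinant $4$. The translates $\{[-1,1]^2+b:\ b\in 2\mathbb{Z}^2\}$ tile the plane, so they pack with density $V(D_\infty)/d(2\Lambda_\infty^{(1)})=4/4=1$; since no packing can exceed density $1$, this packing is optimal, and the ball centres are exactly the vertices of $2\Lambda_\infty^{(1)}$, as asserted.

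The step requiring the most care is the lower bound $\Delta(D_\infty)\ge 1$: it is the only place where genuine input (Minkowski's theorem, or the continuity of $p\mapsto\Delta(D_p)$) enters, and one must keep track of the open/closed distinction so that the critical lattice sits with its determinant exactly at the bound. The remaining verifications — the determinant of $\Lambda_\infty^{(1)}$, the admissibility, the tiling, and the resulting density — are routine.
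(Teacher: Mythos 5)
Your proof is correct and follows the same outline as the paper's: verify that $\Lambda_\infty^{(1)}=\mathbb{Z}^2$ is $D_\infty$-admissible with $d(\Lambda_\infty^{(1)})=1$, identify $\Delta(D_\infty)=1$, and conclude from the square tiling that the density is $1$. The difference is one of completeness rather than of route: the paper's proof consists of these assertions with no justification, whereas you actually prove the nontrivial half, the lower bound $\Delta(D_\infty)\ge 1$, via Minkowski's convex body theorem, and you correctly note that the alternative route (letting $p\to\infty$ in Theorem \ref{ggm}) would need a continuity statement for $p\mapsto\Delta(D_p)$ that the paper never supplies. Your use of the packing-lattice criterion and the density formula (Propositions \ref{p1} and \ref{p3}) to pass from the critical lattice of $2D_\infty$ to optimality is exactly the mechanism the paper uses later in Theorem \ref{opb}, made explicit here.

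One discrepancy you should be aware of, since your proof silently repairs it: the proposition (and the paper's proof) attributes the packing to ``the sublattice of index two'' of $\Lambda_\infty^{(1)}$, but the lattice your argument produces is $2\Lambda_\infty^{(1)}=2\mathbb{Z}^2$, of determinant $4$ and hence of index \emph{four} in $\mathbb{Z}^2$. A genuine index-two sublattice of $\mathbb{Z}^2$ has determinant $2$ and contains a vector of sup-norm $1$ (e.g.\ $(1,1)$ in the checkerboard sublattice), so translates of $[-1,1]^2$ by it overlap and the resulting ``density'' $4/2=2$ is absurd; the statement as literally worded cannot hold. Your substitution of $2\mathbb{Z}^2$ is the correct reading, and it is consistent with the paper's own determinant formulas (e.g.\ $\Delta(2D_p)=4\Delta(D_p)$ in the corollary to Proposition \ref{p2}), which correspond to the doubled lattice $2\Lambda$ rather than to any index-two sublattice; the index-two phrasing is a slip that propagates through the paper, not a flaw in your argument.
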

{\bf Proof.}    The lattice $\Lambda_{\infty}^{(1)}$ is $D_{\infty}$-admissible.   We have $\Delta(D_{\infty}) = 1$ and
 $d(\Lambda_{\infty}^{(1)} = 1$. Minkowski balls  for $ p = \infty$ are congruent squares. Hance we have the optimal 
 sublattice of index two of the lattice $\Lambda_{\infty}^{(1)}$  packing of the squares with the density $1$.\\

\begin{proposition}
\label{lp3}
The lattice 
\[
\Lambda_{2}^{(0)} =  \{(1, 0), (\frac{1}{2}, \frac{\sqrt{3}}{2})\}
\]
is the critica lattice for $D_2$. 
The lattice 
packing of Davis balls  for $p = 2$ gives the optimal  of the  density $\approx 0.91$ packing 
 with respect to the sublattice of index two of the critical lattice $\Lambda_{2}^{(0)}$.
 The centers of the Minkowski balls in this case are at the vertices of  the sublattice of index two of the lattice $\Lambda_{2}^{(0)}$.
\end{proposition}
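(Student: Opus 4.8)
The plan is to mirror the two-step pattern of Propositions \ref{lp2} and \ref{lp4}: first show that $\Lambda_2^{(0)}$ is critical for the unit disk $D_2$, and then pass to the doubled disk $2D_2$ to read off the optimal packing lattice and its density. For criticality I would first check $D_2$-admissibility directly. The six shortest vectors of $\Lambda_2^{(0)}$, namely $\pm(1,0)$, $\pm(\tfrac12,\tfrac{\sqrt3}{2})$ and $\pm(\tfrac12,-\tfrac{\sqrt3}{2})$, all have Euclidean norm $1$ and hence lie on the boundary $C_2$; every remaining nonzero lattice vector has norm $>1$, so the open disk $ID_2$ meets $\Lambda_2^{(0)}$ only in the origin and the lattice is admissible. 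To upgrade admissibility to \emph{criticality} I would invoke Theorem \ref{ggm} together with its corollary at $p=2$, where $\sigma_2=(2^2-1)^{1/2}=\sqrt3$ gives $\Delta(D_2)=\Delta_2^{(0)}=\tfrac12\sigma_2=\tfrac{\sqrt3}{2}$; since $d(\Lambda_2^{(0)})=1\cdot\tfrac{\sqrt3}{2}-0\cdot\tfrac12=\tfrac{\sqrt3}{2}$ equals $\Delta(D_2)$, the admissible lattice $\Lambda_2^{(0)}$ attains the critical determinant and is therefore critical.

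For the packing assertion I would combine Propositions \ref{p1} and \ref{p3}. By Proposition \ref{p1} a lattice packs $D_2$ iff it is admissible for $2D_2$, and by Proposition \ref{p3} the density $V(D_2)/d(\Lambda)$ of a $(D_2,\Lambda)$-packing is maximal precisely when $\Lambda$ is critical for $2D_2$. The one genuine computation is to identify this critical lattice, which a scaling argument supplies: $\Lambda$ is $D_2$-admissible iff $2\Lambda$ is $2D_2$-admissible, and $d(2\Lambda)=4\,d(\Lambda)$, whence $\Delta(2D_2)=4\Delta(D_2)=2\sqrt3$ and the critical lattice of $2D_2$ is $2\Lambda_2^{(0)}$ — the index-two sublattice of the statement, with basis $(2,0)$, $(1,\sqrt3)$. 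This is the asserted optimal packing lattice, and the ball centres are exactly its vertices.

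It then remains to evaluate the density. The volume formula for $D_p$ established above gives $V(D_2)=4(\Gamma(3/2))^2/\Gamma(2)=\pi$, so the optimal density is $V(D_2)/\Delta(2D_2)=\pi/(2\sqrt3)=\pi\sqrt3/6\approx0.9069$, in agreement with the stated value $\approx0.91$; this is the classical hexagonal circle-packing density. The step carrying the real content is the criticality of $\Lambda_2^{(0)}$, i.e. that no $D_2$-admissible lattice has determinant below $\tfrac{\sqrt3}{2}$. For $p=2$ this is the classical Thue--Minkowski theorem that the hexagonal lattice is densest-admissible for the disk, and here it is delivered outright by Theorem \ref{ggm}; the remaining points — admissibility of $\Lambda_2^{(0)}$, the identity $\Delta(2D_2)=4\Delta(D_2)$, and the area evaluation — are routine.
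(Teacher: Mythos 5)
Your argument is correct, and its second half takes a genuinely different route from the paper's. The criticality step is the same in both: $\Lambda_2^{(0)}$ is $D_2$-admissible and $d(\Lambda_2^{(0)})=\tfrac{\sqrt3}{2}=\Delta(D_2)$, the value coming from Theorem~\ref{ggm}. For the packing claim, however, the paper does not touch Propositions~\ref{p1} and~\ref{p3} at all: it remarks that the relevant sublattice of $\Lambda_2^{(0)}$ is again hexagonal and then invokes the classical Thue--Fejes T\'oth theorem \cite{FegesToth,Thue} that the hexagonal (honeycomb) packing is the densest packing of equal circles in the plane, of density $\approx 0.91$. You instead stay inside the paper's own machinery: packing lattices of $D_2$ are exactly the $2D_2$-admissible lattices (Proposition~\ref{p1}), the density $V(D_2)/d(\Lambda)$ is maximal when $\Lambda$ is critical for $2D_2$ (Proposition~\ref{p3}), and the scaling identity $\Delta(2D_2)=4\Delta(D_2)=2\sqrt3$ identifies that critical lattice as $2\Lambda_2^{(0)}$. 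This is precisely the scheme the paper reserves for Theorem~\ref{opb}; applied here it is self-contained and yields the exact density $\pi/(2\sqrt3)=\pi\sqrt3/6\approx 0.9069$. What the paper's citation buys instead is the stronger fact that this configuration is optimal among \emph{all} circle packings, not only lattice packings.

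One discrepancy you inherit from the statement rather than resolve: the lattice you correctly exhibit as the optimal packing lattice, $2\Lambda_2^{(0)}$ with basis $(2,0)$, $(1,\sqrt3)$, is a sublattice of $\Lambda_2^{(0)}$ of index $4$, not $2$ --- your own computation $d(2\Lambda)=4\,d(\Lambda)$ shows this. Calling it ``the index-two sublattice of the statement'' conflates the doubling $\Lambda\mapsto 2\Lambda$ with an index-two sublattice; a genuine index-two sublattice of $\Lambda_2^{(0)}$ has determinant $\sqrt3<\Delta(2D_2)=2\sqrt3$, hence is not even $2D_2$-admissible and cannot be a packing lattice of $D_2$ (nor is any index-two sublattice of a hexagonal lattice hexagonal, since $2$ is not represented by $x^2+xy+y^2$). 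The paper's own proof has the same defect, so this does not place your argument below the paper's; but the honest conclusion is that the optimal packing lattice is $2\Lambda_2^{(0)}$, of index four, rather than an echo of the statement's ``index two.''
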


{\bf Proof.} As in Proposition (\ref{lp2}) a critical lattice for $D_2$ is a lattice $\Lambda$ which is $D_2$-admissible and which has determinant 
$d(\Lambda) = \Delta(D_2)$. The lattice $\Lambda_{2}^{(0)}$ is $D_2$-admissible.  We have $\Delta(D_2) = \frac{\sqrt{3}}{2}$ and $d(\Lambda_{2}^{(0)}) = \frac{\sqrt{3}}{2}$. So sublattice of index two of the lattice $\Lambda_{2}^{(0)} $ is 
 the hexagonal lattice.  Next, we use the following classical results \cite{FegesToth,Thue}: the optimal  sphere packing of dimension 2  is the  hexagonal lattice (honeycomb) packing with the density $\approx 0.91$.
  
\begin{proposition}
\label{p2}
If $\Lambda$ is the critical lattice of the Minkowski ball $D_p$ than the sublattice $\Lambda_2$  of index two of the critical lattice is the critical lattice of $2 D_p$.
 (Examples for $ n = 1, 2, \infty $ above). 
\end{proposition}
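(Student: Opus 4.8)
The plan is to prove that $\Lambda_2$ is critical for $2D_p$ by verifying the two defining conditions of the Corollary to Theorem \ref{ggm} separately and directly, rather than by invoking scaling abstractly: namely that $\Lambda_2$ is $2D_p$-admissible, and that $d(\Lambda_2)=\Delta(2D_p)$. First I would pin down, from the worked cases in Propositions \ref{lp2}, \ref{lp3}, \ref{lp4}, the description of the sublattice that actually carries the optimal packing: starting from a basis $\{v_1,v_2\}$ of the critical lattice $\Lambda$ of $D_p$, the relevant sublattice $\Lambda_2$ is the one with generators $\{2v_1,2v_2\}$, so that every nonzero $w\in\Lambda_2$ is of the form $w=2u$ with $0\ne u\in\Lambda$. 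I would record here that the defining determinant condition below forces exactly this dilate, so the phrase ``index two'' in the statement is bookkeeping for the doubling of each generator.

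Next I would establish admissibility, which is where the real content lies. By Theorem \ref{ggm} and the shell description recalled just after it, the critical lattice $\Lambda$ carries three pairs of points on the boundary curve $C_p$ and no nonzero lattice point in the open region $|x|^p+|y|^p<1$; equivalently every nonzero $u=(u_1,u_2)\in\Lambda$ satisfies $|u_1|^p+|u_2|^p\ge 1$, with equality precisely at the six shell points. Writing an arbitrary nonzero $w=2u\in\Lambda_2$, I get $|w_1|^p+|w_2|^p=2^p\big(|u_1|^p+|u_2|^p\big)\ge 2^p$, so $w$ lies outside the open domain $|x|^p+|y|^p<2^p$, i.e. outside the interior of $2D_p$. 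Hence $\Lambda_2$ is $2D_p$-admissible, and the six scaled shell points $2(P_x,P_y)$ land on the boundary $2C_p$, reproducing the three-pairs-of-boundary-points configuration that characterises a critical lattice.

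For the determinant I would use the explicit critical-determinant values of the Corollary to Theorem \ref{ggm}. Dilation by $2$ multiplies the planar covolume by $2^2$, so $d(\Lambda_2)=4\,d(\Lambda)=4\,\Delta(D_p)$; the same homogeneity applied to the family of $2D_p$-admissible lattices gives $\Delta(2D_p)=4\,\Delta(D_p)$, and matching these against $\Delta^{(0)}_p=\frac12\sigma_p$ with $\sigma_p=(2^p-1)^{1/p}$, and against $\Delta^{(1)}_p=4^{-1/p}\frac{1+\tau_p}{1-\tau_p}$, confirms $d(\Lambda_2)=\Delta(2D_p)$ in each of the Minkowski, Davis and Chebyshev--Cohn ranges. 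Together with the admissibility of the previous paragraph this yields that $\Lambda_2$ is critical for $2D_p$. I would then close the loop with the packing interpretation: by Proposition \ref{p1} a lattice packs $D_p$ exactly when it is admissible for $2D_p$, and by Proposition \ref{p3} the packing is densest when the lattice is critical for $2D_p$; thus this $\Lambda_2$ is precisely the lattice realising the optimal $D_p$-packing, with density $V(D_p)/\Delta(2D_p)$, in agreement with the three worked examples.

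The step I expect to be the genuine obstacle is the admissibility verification, not the determinant bookkeeping. One must be certain that passing from $\Lambda$ to $\Lambda_2$ introduces no nonzero lattice point strictly inside $2D_p$; this is transparent for the six scaled shell points, but it rests on the input from Theorem \ref{ggm} that the shortest nonzero vectors of the critical $\Lambda$ already sit on $C_p$, so that no shorter vector survives to be carried into the interior of $2D_p$ after dilation. I would isolate this as a short lemma and check it uniformly across the three ranges of $p$ using the critical-lattice data listed after Theorem \ref{ggm}, with convexity of $D_p$ for $p\ge 1$ ensuring that no further lattice point can enter the open domain.
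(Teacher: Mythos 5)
Your proposal is correct, and it reaches the conclusion by a more elementary route than the paper. The paper's own proof stays inside the Minkowski--Cohn machinery: it takes the parametrization (\ref{eq2}) of admissible lattices having three pairs of points on the boundary of $D_p$, doubles the basis vectors to obtain the corresponding family for $2D_p$, recomputes the moduli function $\Delta(p,\sigma)_{2D_p}=4(\tau+\sigma)(1+\tau^{p})^{-1/p}(1+\sigma^{p})^{-1/p}$ over the same domain ${\mathcal M}$, and reads off the critical determinants $\Delta^{(0)}_p(2D_p)=2\sigma_p$ and $\Delta^{(1)}_p(2D_p)=4^{1-\frac{1}{p}}\frac{1+\tau_p}{1-\tau_p}$; criticality of the doubled lattice is then a matter of matching determinants. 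You instead verify the two defining conditions directly from the definitions: admissibility of $2\Lambda$ for $2D_p$ via the trivial scaling $|2u_1|^p+|2u_2|^p=2^p\bigl(|u_1|^p+|u_2|^p\bigr)\ge 2^p$, and $d(2\Lambda)=4\,d(\Lambda)=4\,\Delta(D_p)=\Delta(2D_p)$ by homogeneity of the critical determinant. Your route buys generality: it works for any dilation factor and any centrally symmetric convex body, and it does not presuppose (as the paper's argument implicitly does) that the critical lattices of $2D_p$ lie in the scaled three-pair family. Note also that the obstacle you flag in your last paragraph is not real --- the scaling inequality you already wrote settles admissibility completely, with no shortest-vector or shell input needed. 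Finally, one point you handled more carefully than the statement itself: in the plane $[\Lambda:2\Lambda]=4$, so the lattice that both you and the paper actually produce is the dilate $2\Lambda$, of index four rather than two; your reading of ``index two'' as bookkeeping for doubling each generator is the only one consistent with the paper's own value $2\sigma_p=4\cdot\frac{1}{2}\sigma_p$ (a determinant of $2\,d(\Lambda)$, as a genuine index-two sublattice would have, is smaller than $\Delta(2D_p)$ and such a sublattice retains a vector interior to $2D_p$, hence cannot be admissible), and it is confirmed by the worked examples at $p=1,2,\infty$.
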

{\bf Proof.} Since the Minkowski ball $D_p$ is symmetric about the origin and convex, then $2D_p$ is convex and symmetric
 about the origin \cite{Cassels,lek}.
 
When parametrizing admissible lattices $\Lambda$ having three pairs of points on the boundary of the ball $D_p$,
the following parametrization is used \cite{Co:MC,GM:P2,GGM:PM,Gl4}:
 \begin{equation}
\label{eq2}
\Lambda =  \{((1 + \tau^{p})^{-\frac{1}{p}}, \tau(1 + \tau^{p})^{-\frac{1}{p}}), (-(1 + \sigma^p)^{-\frac{1}{p}}, \sigma(1 + \sigma^p)^{-\frac{1}{p}})\}
\end{equation}
 where 
 \[
 0 \le  \tau < \sigma , \; 0 \le \tau \le \tau_p.
  \]
   $\tau_{p}$ is defined by the
equation $ 2(1 - \tau_{p})^{p} = 1 + \tau_{p}^{p}, \; 0 \leq
\tau_{p} < 1. $ 
\[
  1 \le\sigma \le \sigma_p, \; \sigma_p = (2^p - 1)^{\frac{1}{p}}.
\]
Admissible lattices of the form (\ref{eq2}) for doubled Minkowski balls $2 D_p$ have a representation of the form 
 \begin{equation}
\label{eq3}
\Lambda_{2 D_p} =  \{2((1 + \tau^{p})^{-\frac{1}{p}}, 2\tau(1 + \tau^{p})^{-\frac{1}{p}}), (-2(1 + \sigma^p)^{-\frac{1}{p}}, 2\sigma(1 + \sigma^p)^{-\frac{1}{p}})\}
\end{equation}
  Hence the Minkowski-Cohn moduli space for these admissible lattices has the form
 \begin{equation}
 \Delta(p,\sigma)_{2 D_p} = 4 (\tau + \sigma)(1 + \tau^{p})^{-\frac{1}{p}}
  (1 + \sigma^p)^{-\frac{1}{p}}, 
 \end{equation}
in the same domain
 $$ {\mathcal M}: \; \infty > p > 1, \; 1 \leq \sigma \leq \sigma_{p} =
 (2^p - 1)^{\frac{1}{p}}, $$
  
  \begin{corollary}
 Consequently, the critical determinants of doubled Minkowski balls have a representation of the form

\begin{enumerate}  

\item ${\Delta^{(0)}_p}(2D_p) = \Delta(p, {\sigma_p})_{2D_p} =  2\cdot {\sigma}_{p},$

\item  ${\sigma}_{p} = (2^p - 1)^{1/p},$

\item ${\Delta^{(1)}_p}(2D_p)  = \Delta(p,1))_{2D_p} = 4^{1 - \frac{1}{p}}\frac{1 +\tau_p }{1 - \tau_p},$

\item  $ 2(1 - \tau_p)^p = 1 + \tau_p^p,  \;  0 \le \tau_p < 1. $

\end{enumerate}
\end{corollary}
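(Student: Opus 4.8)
The plan is to deduce the doubled-ball critical determinants from the single-ball values already recorded in the preceding corollary, using nothing more than the behaviour of critical determinants under dilation in the plane. The key general fact is that for any symmetric convex region $\mathcal R \subset {\mathbb R}^2$ and any $\alpha > 0$ one has $\Delta(\alpha\mathcal R) = \alpha^2\,\Delta(\mathcal R)$: a lattice $\Lambda$ is admissible for $\alpha\mathcal R$ precisely when $\alpha^{-1}\Lambda$ is admissible for $\mathcal R$, and $d(\alpha^{-1}\Lambda) = \alpha^{-2}d(\Lambda)$ in dimension two, so the two infima over admissible lattices differ exactly by the factor $\alpha^2$. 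Taking $\alpha = 2$ gives $\Delta(2D_p) = 4\,\Delta(D_p)$ on each branch, and the four assertions will follow by substituting the values of Theorem \ref{ggm}.

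To make this transparent on the parametrized family, I would compare the determinant functions directly. The admissible lattices of $D_p$ having three pairs of points on $C_p$ are given by (\ref{eq2}), and a direct computation of $\abs{ad-bc}$ yields $\Delta(p,\sigma) = (\tau+\sigma)(1+\tau^p)^{-1/p}(1+\sigma^p)^{-1/p}$. The corresponding admissible lattices for $2D_p$ are the dilations (\ref{eq3}), obtained by multiplying each basis vector by $2$; scaling both basis vectors by $2$ multiplies the determinant by $4$, so one obtains $\Delta(p,\sigma)_{2D_p} = 4\,(\tau+\sigma)(1+\tau^p)^{-1/p}(1+\sigma^p)^{-1/p}$, which is exactly the displayed moduli function. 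Crucially, the parameters range over the \emph{same} domain ${\mathcal M}$, because the boundary of $2D_p$ is the dilation $2C_p$ and a point lies on it iff it is twice a point of $C_p$; thus dilation by $2$ sets up a bijection between the boundary-touching admissible lattices of $D_p$ and those of $2D_p$.

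With the two objective functions agreeing up to the constant factor $4$ over an identical domain, their extremal values are attained at the same parameters $(\tau,\sigma)$, namely at $\sigma = \sigma_p$ for the $\Lambda^{(0)}_p$ branch and at $\sigma = 1,\ \tau = \tau_p$ for the $\Lambda^{(1)}_p$ branch, exactly as in the preceding corollary. Substituting the single-ball values then gives $\Delta^{(0)}_p(2D_p) = 4\cdot\tfrac12\,\sigma_p = 2\sigma_p$ and $\Delta^{(1)}_p(2D_p) = 4\cdot 4^{-1/p}\tfrac{1+\tau_p}{1-\tau_p} = 4^{\,1-1/p}\tfrac{1+\tau_p}{1-\tau_p}$, with $\sigma_p$ and $\tau_p$ characterized as before by items (2) and (4). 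This establishes all four statements.

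The argument is essentially a clean scaling, so there is no deep obstacle; the only point that must not be skipped is the verification that dilation by $2$ carries the boundary-touching admissible lattices of $D_p$ bijectively onto those of $2D_p$. This is what guarantees that the infimum defining $\Delta(2D_p)$ is genuinely taken over the image family, so that the factor $4$ may be pulled out of the \emph{optimization} rather than merely out of one particular lattice's determinant.
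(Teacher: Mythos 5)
Your proposal is correct, and its computational core coincides with the paper's own argument: the paper likewise passes from the parametrization of boundary-touching admissible lattices of $D_p$ to their dilations by $2$ for $2D_p$, observes that the Minkowski--Cohn moduli function picks up a factor of $4$ while the parameter domain ${\mathcal M}$ is unchanged, and then reads off the two critical values at $\sigma = \sigma_p$ and at $\sigma = 1$, $\tau = \tau_p$, exactly as in your second and third paragraphs. What you add, and what the paper never states explicitly, is the general dilation identity $\Delta(\alpha{\mathcal R}) = \alpha^{2}\,\Delta({\mathcal R})$, proved directly at the level of the infimum over \emph{all} admissible lattices. This is a genuine gain in rigor and economy: the paper's route silently uses the fact that the critical determinant of the convex symmetric body $2D_p$ is attained within the family of lattices having three pairs of points on the boundary (a nontrivial input from Minkowski's theory, needed to justify optimizing only over the parametrized family), whereas your scaling lemma bypasses that issue entirely, since admissibility of $\Lambda$ for $2D_p$ is equivalent to admissibility of $\tfrac{1}{2}\Lambda$ for $D_p$ and the factor $4$ is pulled out of the infimum itself. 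With that lemma in hand, items (1)--(4) follow from Theorem \ref{ggm} and its corollary by pure substitution, $4\cdot\tfrac{1}{2}\sigma_p = 2\sigma_p$ and $4\cdot 4^{-1/p}\tfrac{1+\tau_p}{1-\tau_p} = 4^{1-1/p}\tfrac{1+\tau_p}{1-\tau_p}$, so either route establishes the statement.
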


  And these are the determinants of the sublattices of index 2 of the critical lattices of the corresponding Minkowski balls.
  
   \begin{theorem} 
  \label{opb} 
The optimal lattice packing of  Minkowski and Chebyshev-Cohn balls is realized with respect to the sublattices of the index two of  critical lattices
  \[
  (-2^{-1/p},2^{-1/p}) \in \Lambda_{p}^{(1)}.
  \]   
The dencity of this optimal packing is given by the expression
\[
    V(D_p)/\Delta^{(1)}_p (2D_p),
\]
where $V(D_p)$ is the volume of the ball $D_p$ and $\Delta^{(1)}_p (2D_p)$ is the critical determinant of the domain $2D_p$.\\
The optimal lattice packing of   Davis  balls is realized with respect to the sublattices of the index two of  critical lattices
  \[
  (1,0)\in\Lambda_{p}^{(0)},
  \]   
The dencity of this optimal packing is given by the expression
\[
    V(D_p)/\Delta^{(0)}_p(2D_p)
\]
where $V(D_p)$ is the volume of the ball $D_p$ and $\Delta^{(0)}_p(2D_p)$ is the critical determinant of the domain $2D_p$.
  \end{theorem}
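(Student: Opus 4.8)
The plan is to assemble the optimal-packing criterion of Propositions~\ref{p1} and~\ref{p3} with the explicit critical determinants furnished by Theorem~\ref{ggm} and its corollaries, and then to read off the packing lattice from Proposition~\ref{p2}. The starting point is Proposition~\ref{p3}: among all packing lattices $\Lambda$ of $D_p$ --- equivalently, by Proposition~\ref{p1}, among all lattices admissible for $2D_p$ --- the density $V(D_p)/d(\Lambda)$ is largest exactly when $\Lambda$ is critical for $2D_p$, i.e. when $d(\Lambda) = \Delta(2D_p)$. Thus, for optimality among lattice packings, the whole problem reduces to identifying the critical lattice of $2D_p$ together with its determinant, and then dividing into $V(D_p)$.

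First I would invoke Theorem~\ref{ggm} to split on the value of $p$ exactly along the Minkowski/Davis/Chebyshev-Cohn trichotomy. For $1<p<2$ and for $p\ge p_0$ the theorem gives $\Delta(D_p)=\Delta(p,1)=\Delta^{(1)}_p$, attained by the critical lattice $\Lambda_p^{(1)}$ normalized by $(-2^{-1/p},2^{-1/p})\in\Lambda_p^{(1)}$; for $2\le p<p_0$ it gives $\Delta(D_p)=\Delta(p,\sigma_p)=\Delta^{(0)}_p$, attained by $\Lambda_p^{(0)}$ normalized by $(1,0)\in\Lambda_p^{(0)}$. This is precisely the reason the three classes of balls were introduced, so the case distinction in the statement is forced by, and coincides with, the two branches of Theorem~\ref{ggm}.

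Next I would pass from $D_p$ to $2D_p$ using Proposition~\ref{p2}: the sublattice of index two of the critical lattice of $D_p$ is the critical lattice of $2D_p$. Combined with the corollary computing the doubled critical determinants, $\Delta^{(1)}_p(2D_p)=4^{1-1/p}\frac{1+\tau_p}{1-\tau_p}$ and $\Delta^{(0)}_p(2D_p)=2\sigma_p$, this identifies the optimal packing lattice in each range as the stated sublattice of index two of $\Lambda_p^{(1)}$ (for Minkowski and Chebyshev-Cohn balls) or of $\Lambda_p^{(0)}$ (for Davis balls), and identifies its determinant with $\Delta(2D_p)$. Substituting $d(\Lambda)=\Delta^{(1)}_p(2D_p)$, respectively $d(\Lambda)=\Delta^{(0)}_p(2D_p)$, into the density formula of Proposition~\ref{p3} then yields the two densities $V(D_p)/\Delta^{(1)}_p(2D_p)$ and $V(D_p)/\Delta^{(0)}_p(2D_p)$ claimed in the statement.

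The main obstacle is the bookkeeping around the doubling step and its admissibility. Concretely, I must verify the planar scaling law $\Delta(cD_p)=c^2\Delta(D_p)$ so that $\Delta(2D_p)=4\Delta(D_p)$ is consistent with the corollary's values (indeed $\Delta^{(1)}_p(2D_p)=4\,\Delta^{(1)}_p$ and $\Delta^{(0)}_p(2D_p)=4\,\Delta^{(0)}_p$), confirm through Proposition~\ref{p2} that the indicated sublattice of the critical lattice of $D_p$ is exactly the lattice realizing $\Delta(2D_p)$, and check that it remains admissible for $2D_p$ so that it genuinely gives a packing rather than merely the correct determinant. The deeper content --- that $\Lambda_p^{(1)}$ and $\Lambda_p^{(0)}$ are the \emph{global} minimizers of the determinant over all admissible lattices in their respective $p$-ranges, which is what makes the resulting packing optimal and not just locally so --- is already supplied by Theorem~\ref{ggm}; once the scaling and admissibility under doubling are pinned down, the conclusion follows by direct substitution.
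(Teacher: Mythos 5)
Your proposal is correct and follows essentially the same route as the paper: the paper's proof likewise combines Proposition~\ref{p2} (the index-two sublattice of the critical lattice of $D_p$ is critical for $2D_p$), Proposition~\ref{p1} (packing lattices of $D_p$ are exactly the lattices admissible for $2D_p$), and Proposition~\ref{p3} (the density $V(D_p)/d(\Lambda)$ is maximal at the critical lattice of $2D_p$), with the Minkowski/Chebyshev-Cohn versus Davis split coming from Theorem~\ref{ggm} exactly as you describe. Your additional checks (the scaling $\Delta(2D_p)=4\Delta(D_p)$ and the consistency with the corollary's doubled determinants) are sound bookkeeping that the paper leaves implicit.
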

{\bf  Proof}.
 By Proposition \ref{p2} the critical lattice of $2 D_p$ is  the sublattice  of index two of the critical lattice 
of Minkowski ball $D_p$ .
So it is the admissible lattice for $2 D_p$ and by Proposition \ref{p1} 
is packing lattice of $D_p$.
By Proposition \ref{p3} the corresponding lattice packing has maximal density and so is optimal.
  
\begin{corollary}
 For the optimal lattice packing of  Minkowski and Chebyshev-Cohn balls we have
$ \Delta^{(1)}_p (2D_p) = 4^{1 - \frac{1}{p}} \frac{1 + \tau_p}{1 - \tau_p}$, so the central density is
\[
   \delta = 4^{\frac{1}{p} - 1} \frac{1 - \tau_p}{1 + \tau_p}.
\]
 For the optimal lattice packing of   Davis  balls we have
$\Delta^{(0)}_p (2D_p) = 2 \sigma_p$, so the central density is
\[
   \delta = \frac{1}{2} {\sigma_p}^{-1}.
\]
\end{corollary}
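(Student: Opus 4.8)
The plan is to reduce the corollary to the already-established value of the critical determinant of the doubled ball together with the definition of the central density. First I would invoke Theorem~\ref{opb}: the optimal lattice packing of the Minkowski and Chebyshev-Cohn balls (respectively the Davis balls) is realized with respect to the sublattice of index two of $\Lambda_{p}^{(1)}$ (respectively $\Lambda_{p}^{(0)}$), and by Proposition~\ref{p2} this sublattice is precisely the critical lattice of $2D_p$. Hence the covolume $d(\Lambda)$ of the packing lattice equals $\Delta^{(1)}_p(2D_p)$ in the Minkowski/Chebyshev-Cohn case and $\Delta^{(0)}_p(2D_p)$ in the Davis case.

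Second, I would recall the explicit expressions for these critical determinants established in the Corollary following the parametrization~(\ref{eq3}): namely $\Delta^{(1)}_p(2D_p) = 4^{1-\frac{1}{p}}\frac{1+\tau_p}{1-\tau_p}$ with $2(1-\tau_p)^p = 1+\tau_p^p$, and $\Delta^{(0)}_p(2D_p) = 2\sigma_p$ with $\sigma_p = (2^p-1)^{1/p}$. This immediately yields the first assertion in each clause of the corollary, since these are verbatim restatements of the values already computed.

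Third, I would apply the definition of central density. Since the $D_p$ are unit $\ell^p$-balls (packing radius normalized to one) and the packing density of a $(D_p,\Lambda)$-packing is $V(D_p)/d(\Lambda)$ by Proposition~\ref{p3}, the central density --- the packing density divided by the volume $V(D_p)$ of a single ball, equivalently the number of lattice centers per unit volume --- is simply $\delta = 1/d(\Lambda) = 1/\Delta_p(2D_p)$. Taking reciprocals of the two determinants then gives $\delta = 4^{\frac{1}{p}-1}\frac{1-\tau_p}{1+\tau_p}$ in the Minkowski/Chebyshev-Cohn case and $\delta = \frac{1}{2}\sigma_p^{-1}$ in the Davis case, as claimed.

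The argument is essentially a bookkeeping step rather than a genuine obstacle: once the correct normalization of the central density is fixed as the reciprocal of the critical determinant of the doubled ball, the stated formulas follow by inverting expressions already in hand. The only point requiring care is the convention for $\delta$ --- I would state explicitly that here the central density denotes the reciprocal of $\Delta_p(2D_p)$ (equivalently, the packing density per unit ball volume), so that no ambiguity remains in the two displayed formulas.
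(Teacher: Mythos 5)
Your proposal is correct and follows exactly the route the paper intends: the corollary is an immediate consequence of Theorem~\ref{opb} (the optimal packing lattice is the critical lattice of $2D_p$, via Proposition~\ref{p2}), the previously computed values $\Delta^{(1)}_p(2D_p)$ and $\Delta^{(0)}_p(2D_p)$, and the observation that the central density is the packing density divided by $V(D_p)$, i.e.\ the reciprocal $1/\Delta_p(2D_p)$. Your explicit remark fixing the normalization of $\delta$ is a useful clarification of what the paper leaves implicit, but it is the same argument.
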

   
   \section{Critical lattices, Minkowski domains and  their optimal packing}
   \label{secpmdccdomains}

   Let $D$ be a fixed bounded symmetric about origin  convex body ({\it centrally symmetric convex body} for short) with volume
$V(D)$.
When considering packing problems for such $D$, it does not matter whether we consider such $D$ with or without a boundary \cite{Cassels,lek}.

\begin{proposition} \cite{Cassels,lek}.
\label{p1}
 If $D$ is symmetric about the origin and convex, then $2D$ is convex and symmetric
 about the origin.
\end{proposition}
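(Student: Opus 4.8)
The plan is to verify the two asserted properties of $2D$ directly from the definitions, exploiting that the dilation $\phi \colon x \mapsto 2x$ is a linear bijection of ${\mathbb R}^n$. Recall that $2D = \{2x : x \in D\}$, that $D$ being symmetric about the origin means $x \in D \Rightarrow -x \in D$, and that $D$ being convex means $\lambda x + (1-\lambda)y \in D$ whenever $x, y \in D$ and $0 \le \lambda \le 1$. Everything reduces to the fact that $\phi$ commutes with negation and with the formation of convex combinations.

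First I would establish central symmetry. Given an arbitrary $z \in 2D$, I write $z = 2x$ with $x \in D$. Since $D$ is symmetric about the origin, $-x \in D$, and therefore $-z = 2(-x) \in 2D$. Thus $2D$ is symmetric about the origin.

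Next I would establish convexity. Given $z_1, z_2 \in 2D$ and $\lambda \in [0,1]$, I write $z_i = 2x_i$ with $x_i \in D$. By the linearity of $\phi$, the convex combination satisfies $\lambda z_1 + (1-\lambda)z_2 = 2\bigl(\lambda x_1 + (1-\lambda)x_2\bigr)$. The convexity of $D$ gives $\lambda x_1 + (1-\lambda)x_2 \in D$, whence $\lambda z_1 + (1-\lambda)z_2 \in 2D$. Therefore $2D$ is convex.

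There is no genuine obstacle here: the whole content is the observation that positive scalar multiplication is a linear map, so both defining properties of $D$ transfer verbatim to its image. The identical argument shows that $\alpha D$ is centrally symmetric and convex for every $\alpha > 0$, which is precisely the fact invoked implicitly when passing to the dilated domains $2^m D_p$ in Section \ref{secpmdccdomains}.
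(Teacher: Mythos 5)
Your proof is correct. Note that the paper itself supplies no proof for this proposition: it is stated with a citation to Cassels and Lekkerkerker, so the only "paper proof" is an appeal to the literature, and your direct verification (the dilation $x \mapsto 2x$ is linear, hence commutes with negation and with convex combinations) is precisely the routine argument those references leave to the reader. Your closing observation that the same argument works for $\alpha D$ with any $\alpha > 0$ is also the fact the paper actually needs downstream, where Corollary \ref{cor1} extends the statement to $2^n D$ by induction on repeated doubling.
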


 \begin{corollary}
 \label{cor1}
 Let $m$ be integer $m \ge 0$ and $n$ be natural greater $m$. 
If $2^m D$ centrally symmetric convex body then $2^n D$ is again centrally symmetrc convex body.
\end{corollary}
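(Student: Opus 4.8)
The plan is to reduce the statement to a single application of the doubling operation and then iterate. First I would write $2^n D = 2^{n-m}\,(2^m D)$, observing that $n-m \ge 1$ is a positive integer since $n > m$. By hypothesis the body $E_0 := 2^m D$ is centrally symmetric and convex, so the task becomes the following: starting from a centrally symmetric convex body, show that multiplying it by the factor $2^{n-m}$ keeps it centrally symmetric and convex.

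The key observation is that this is precisely what Proposition \ref{p1} asserts for a single doubling. Setting $E_{j+1} := 2 E_j$ for $j \ge 0$, Proposition \ref{p1} gives that $E_{j+1}$ is centrally symmetric and convex whenever $E_j$ is. An induction on $j$ from $0$ up to $k := n-m$ then yields that $E_k = 2^k E_0 = 2^{n-m}(2^m D) = 2^n D$ is centrally symmetric and convex, which is the desired conclusion. The only bookkeeping to verify is the exponent arithmetic $2^k E_0 = 2^n D$, which is immediate.

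I do not expect a genuine obstacle here, since the entire content is carried by the already-established Proposition \ref{p1}. If one prefers to bypass the induction altogether, one can instead note directly that multiplication by any positive scalar $\alpha > 0$ is the linear bijection $x \mapsto \alpha x$ of $\mathbb{R}^2$, that such a map sends convex sets to convex sets, and that it commutes with the central symmetry $x \mapsto -x$; applying this with $\alpha = 2^{n-m}$ to $2^m D$ gives the result in one stroke. The inductive route is preferable only because it invokes Proposition \ref{p1} verbatim rather than re-deriving the scaling fact, keeping the argument self-contained within the results of the excerpt.
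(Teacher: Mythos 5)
Your proof is correct and follows essentially the same route as the paper, whose entire proof is the single word ``Induction'': you iterate Proposition \ref{p1} (one doubling preserves central symmetry and convexity) exactly $n-m$ times, which is precisely the intended argument, merely spelled out with the bookkeeping made explicit. The alternative one-stroke scaling remark you add is a fine observation but not needed; the inductive version already matches the paper.
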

{\bf Proof.} Induction. \\

\subsection{Domains.}

We consider the following classes of balls (see Section \ref{Minkowski balls}) and domains.

\begin{itemize}
  
\item  {\it  Minkowski domains}:    $2^m D_p$,  integer $m \ge 1$, for $1 \le p<2$;

\item {\it Davis domains}: $2^m D_p$,  integer $m \ge 1$, for $p_{0} > p \ge 2$;


\item {\it Chebyshev-Cohn domains}: $2^m D_p$,  integer $m \ge 1$,  for $ p \ge p_{0}$;

\end{itemize}

\begin{remark}
\label{r2}
Sometimes, when it comes to using $p$ values that include scopes of different domains, we will use the term Minkowski domains for definitions of different types of domains, specifying
  name when a specific $p$ value or a range of $p$ values containing a single type of domains is specified.
\end{remark}  

\begin{proposition}
\label{p7}
Let $m$ be  integer, $m \ge 1$.
If $\Lambda$ is the critical lattice of the 
ball $D_p$ than the sublattice $\Lambda_{2^m}$  of index $2^m$ is the critical lattice of the domain $2^{m-1} D_p$.
\end{proposition}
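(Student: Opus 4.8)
The plan is to argue by induction on $m$, using Proposition \ref{p2} as the base case and re-using its doubling mechanism at every stage. As preparation I would note that, by Corollary \ref{cor1}, each dilate $2^{k}D_p$ is again a centrally symmetric convex body; hence every one of them has a well-defined critical lattice, and the packing--admissibility dictionary of Propositions \ref{p1} and \ref{p3} is applicable at each step. The base case is Proposition \ref{p2}: if $\Lambda$ is critical for $D_p$, then its index-two sublattice is critical for $2D_p$. This fixes the bookkeeping rule that one doubling of the body is matched by one passage to a sublattice of index two, which is precisely the rule I would iterate.

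For the inductive step I would assume that the sublattice $\Lambda_{2^{k}}$ of the stated index is critical for the corresponding dilate, and apply the doubling principle once more. The essential observation is that the argument of Proposition \ref{p2} is not specific to $D_p$: it uses only that the body and its double are centrally symmetric and convex, so it applies verbatim with $2^{k-1}D_p$ in place of $D_p$. Taking $K = 2^{k-1}D_p$, whose critical lattice is $\Lambda_{2^{k}}$ by hypothesis, its index-two sublattice --- which is $\Lambda_{2^{k+1}}$, the next term in the nested chain $\Lambda \supset \Lambda_2 \supset \Lambda_4 \supset \cdots$ inside $\Lambda$ --- is critical for $2K = 2^{k}D_p$. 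Iterating doubles both the body and the index, so after the required number of steps the sublattice of index $2^{m}$ is critical for the dilate named in the statement, closing the induction. The explicit parametrisations (\ref{eq2}) and (\ref{eq3}), which display the admissible lattices of $2D_p$ as the coordinate-doublings of those of $D_p$, give a concrete check of this step.

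The crux, and the only place requiring genuine work, is the single doubling step on which the induction rests: that the index-two sublattice $\Gamma'$ of a critical lattice $\Gamma$ of a centrally symmetric convex body $K$ is critical for $2K$. The determinant bookkeeping is inherited from Proposition \ref{p2} once it is applied to $K = 2^{k-1}D_p$ in place of $D_p$, so the real obstacle is the geometric admissibility claim: that $\Gamma'$ has no nonzero point in the interior of $2K$. Here I would use the boundary description from Section 1 --- a critical lattice meets $\partial K$ in three pairs of points forming a hexagon --- and argue that the correct index-two sublattice is exactly the one whose shortest vectors are sent onto $\partial(2K)$ rather than into its interior, so that strict admissibility for $2K$ is preserved. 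Combined with the uniqueness of the critical lattice under the normalisation by three boundary pairs recorded in Section 1, this identifies $\Gamma'$ as the critical lattice of $2K$. Establishing this admissibility step rigorously is the main difficulty; granting it, the induction closes and simultaneously produces the index-$2^{m+1}$ lattices required by Theorem \ref{opd}.
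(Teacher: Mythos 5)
Your skeleton --- induction on one doubling at a time, with Proposition \ref{p2} as base case --- is genuinely different from the paper's argument, which is not inductive at all: the paper rescales the Minkowski--Cohn parametrization (\ref{eq2}) by $2^m$ to obtain (\ref{eq3}), notes that the moduli function becomes $2^{2m}(\tau+\sigma)(1+\tau^{p})^{-\frac{1}{p}}(1+\sigma^{p})^{-\frac{1}{p}}$ on the same domain $\mathcal{M}$, and reads off the critical determinants (\ref{cdde0}), (\ref{cdde1}) for all $m$ at once, so that the critical lattice of $2^m D_p$ is exhibited directly as the dilation $2^m\Lambda$ of the critical lattice $\Lambda$ of $D_p$. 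The difference of skeleton would be acceptable by itself; the problem lies in the step you yourself flag as the crux and leave open.

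That step, as you formulate it, cannot be closed. You seek an index-two sublattice $\Gamma'$ of the critical lattice $\Gamma$ of a body $K$ that is strictly admissible for $2K$, chosen so that its short vectors land on $\partial(2K)$. With the standard meaning of index this is impossible, for two independent reasons. First, determinants: $d(\Gamma')=2\,d(\Gamma)=2\Delta(K)$, whereas $\Delta(2K)=4\Delta(K)$ by area scaling in the plane, and no lattice with determinant below the critical determinant can be admissible. Second, the hexagon points themselves obstruct: the three pairs of points of $\Gamma$ on $\partial K$ represent the three nonzero classes of $\Gamma/2\Gamma$, and each of the three index-two sublattices of $\Gamma$ contains exactly one such pair; these points lie on $\partial K$, hence in the interior of $2K$, so \emph{every} index-two sublattice has a nonzero point interior to $2K$ and is not even a packing lattice of $K$. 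The lattice that actually works is the dilation $2\Gamma$, of index $4$ in $\Gamma$ (and $2^m\Lambda$, of index $4^m$, for $2^m D_p$), for which admissibility is immediate: a nonzero point of $2\Gamma$ interior to $2K$ would be twice a nonzero point of $\Gamma$ interior to $K$. This dilated lattice is exactly what the paper's formula (\ref{eq3}) writes down --- all coordinates multiplied by $2^m$ --- even though the text labels it ``the sublattice of index $2^m$''; your proposal takes that label literally and builds the induction on it, and that is where the argument breaks (it also forces you to gloss over the mismatch between ``index $2^m$'' and the body $2^{m-1}D_p$ in the statement). Once ``index-two sublattice'' is replaced by ``the dilated lattice $2\Gamma$'', your main difficulty evaporates, the inductive step becomes the trivial scaling remark above, and indeed the induction itself is unnecessary: a single scaling by $2^m$ reproduces the paper's proof.
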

{\bf Proof.} Since the Minkowski ball $D_p$ is symmetric about the origin and convex, then $2^m D_p$ is convex and symmetric
 about the origin by Corollary \ref{cor1}.
 
When parametrizing admissible lattices $\Lambda$ having three pairs of points on the boundary of the ball $D_p$,
the following parametrization is used \cite{Co:MC,GGM:PM,Gl4}:
 \begin{equation}
\label{eq2}
\Lambda =  \{((1 + \tau^{p})^{-\frac{1}{p}}, \tau(1 + \tau^{p})^{-\frac{1}{p}}), (-(1 + \sigma^p)^{-\frac{1}{p}}, \sigma(1 + \sigma^p)^{-\frac{1}{p}})\}
\end{equation}
 where 
 \[
 0 \le  \tau < \sigma , \; 0 \le \tau \le \tau_p.
  \]
   $\tau_{p}$ is defined by the
equation $ 2(1 - \tau_{p})^{p} = 1 + \tau_{p}^{p}, \; 0 \leq
\tau_{p} < 1. $ 
\[
  1 \le\sigma \le \sigma_p, \; \sigma_p = (2^p - 1)^{\frac{1}{p}}.
\]
Admissible lattices of the form (\ref{eq2}) for $2^m$ {\it  dubling} Minkowski domains $2^m D_p$ have a representation of the form 
 \begin{equation}
\label{eq3}
\Lambda_{2^m D_p} =  \{2^m ((1 + \tau^{p})^{-\frac{1}{p}}, 2^m \tau(1 + \tau^{p})^{-\frac{1}{p}}), (-2^m (1 + \sigma^p)^{-\frac{1}{p}},      2^m \sigma(1 + \sigma^p)^{-\frac{1}{p}})\}
\end{equation}
  Hence the Minkowski-Cohn moduli space for these admissible lattices has the form
 \begin{equation}
 \Delta(p,\sigma)_{2^m D_p} = 2^{2m} (\tau + \sigma)(1 + \tau^{p})^{-\frac{1}{p}}
  (1 + \sigma^p)^{-\frac{1}{p}}, 
 \end{equation}
in the same domain
 $$ {\mathcal M}: \; \infty > p > 1, \; 1 \leq \sigma \leq \sigma_{p} =
 (2^p - 1)^{\frac{1}{p}}, $$

Consequently, the critical determinants of $2^m$ {\it  dubling}
  balls $D_p$ have a representation of the form
\begin{equation}
\label{cdde0}
 {\Delta^{(0)}_p}(2^m D_p) = \Delta(p, {\sigma_p})_{2^m D_p} =  2^{m-1}\cdot {\sigma}_{p},   \; {\sigma}_{p} = (2^p - 1)^{1/p},
 \end{equation}
 \begin{equation}
\label{cdde1}
 {\Delta^{(1)}_p}(2^m D_p)  = \Delta(p,1))_{2^m D_p} = 4^{m - \frac{1}{p}}\frac{1 +\tau_p }{1 - \tau_p},   \;  2(1 - \tau_p)^p = 1 + \tau_p^p,  \;  0 \le \tau_p < 1. 
 \end{equation}
  And these are the determinants of the sublattices of index $2^m$ of the critical lattices of the corresponding  balls $D_p$.
  
\begin{remark}
  Proposition \ref{p7} is a strengthening of Proposition \ref{p2} and its extension on  domains.  
\end{remark}

\begin{proposition} \cite{Cassels,lek}.
\label{p2m21m}
 The dencity of a $(2^{m-1} D_p, \Lambda)$-packing is equal to $V(2^{m-1} D_p)/d(\Lambda)$ and it is maximal if $\Lambda$ is critical for  $(2)^m D_p$.
\end{proposition}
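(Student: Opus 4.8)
The plan is to recognize that the entire statement is nothing more than the general lattice-packing framework of Propositions~\ref{p1} and~\ref{p3}, applied not to the ball $D_p$ but to the enlarged body $K := 2^{m-1} D_p$; this is legitimate precisely because $K$ is again a centrally symmetric convex body. So the proof is a reduction rather than a fresh computation.

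First I would record that $K = 2^{m-1} D_p$ is a bounded convex body symmetric about the origin. For $m = 1$ this is trivial, since then $K = D_p$; for $m \ge 2$ it is exactly Corollary~\ref{cor1} with base body $D_p$ and exponent $m-1 > 0$. Consequently every statement of the general theory of lattice packings of centrally symmetric convex bodies (as in \cite{Cassels,lek}) applies verbatim with $K$ in the role of the packed body.

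Next I would establish the density formula. For a lattice $\Lambda$ giving a $(2^{m-1}D_p,\Lambda)$-packing, the translates $K + b$ with $b \in \Lambda$ have pairwise disjoint interiors, and each fundamental cell of $\Lambda$, of volume $d(\Lambda)$, meets the packing in a set of volume $V(K)$; hence the fraction of space covered equals $V(K)/d(\Lambda) = V(2^{m-1}D_p)/d(\Lambda)$, which is the first assertion. For the maximality claim I would combine two facts. By the version of Proposition~\ref{p1} for the body $K$, a lattice $\Lambda$ is a packing lattice of $K$ if and only if it is admissible for $2K = 2^m D_p$. Since $V(K) = V(2^{m-1}D_p)$ is a fixed constant, maximizing the density $V(K)/d(\Lambda)$ over all packing lattices is equivalent to minimizing $d(\Lambda)$ over all $2^m D_p$-admissible lattices. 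By the definition of the critical determinant this infimum equals $\Delta(2^m D_p)$ and is attained exactly when $\Lambda$ is critical for $2^m D_p$; therefore the density is maximal precisely in that case, which is the second assertion.

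The step that demands the most care is not arithmetic but a matter of scope: one must check that Propositions~\ref{p1} and~\ref{p3}, stated there for the balls $D_p$, in fact hold for an arbitrary centrally symmetric convex body and hence for $K = 2^{m-1}D_p$. Their proofs (the contrary argument establishing the admissibility criterion, and the volume-ratio argument for the density) use only central symmetry and convexity of the packed body, never the specific value of $p$ nor the case $m = 1$, so they transfer without modification. Making this transfer explicit is the only genuine content beyond bookkeeping; everything else follows formally from Corollary~\ref{cor1} and the definition of $\Delta(2^m D_p)$.
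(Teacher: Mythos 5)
Your proposal is correct. Note, however, that the paper itself offers no proof of this proposition at all: it is stated with the citation \cite{Cassels,lek} and left to the literature, exactly as with Proposition~\ref{p3}. What you have written is precisely the standard argument that those references contain, so your proof fills in what the paper delegates rather than diverging from it: the reduction to the general theory of lattice packings of centrally symmetric convex bodies (legitimate by Corollary~\ref{cor1}, since $2^{m-1}D_p$ is again such a body), the volume-ratio computation giving density $V(2^{m-1}D_p)/d(\Lambda)$, and the equivalence ``packing lattice of $K$ $\Leftrightarrow$ admissible for $2K$'' which converts maximization of density into minimization of $d(\Lambda)$ over $2^m D_p$-admissible lattices, hence into $d(\Lambda)=\Delta(2^m D_p)$. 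Your emphasis on the scope issue --- that the proofs of Propositions~\ref{p1} and~\ref{p3} use only central symmetry and convexity, never the specific body $D_p$ --- is indeed the only point of substance in the transfer, and you handle it correctly. One small remark: the claim that the infimum $\Delta(2^m D_p)$ is \emph{attained} (i.e.\ that critical lattices exist) is a separate fact, but it is not needed for the stated direction of the proposition (``maximal \emph{if} $\Lambda$ is critical''), and in this paper existence is supplied constructively by Proposition~\ref{p7} anyway.
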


\begin{theorem}
\label{opd}
The optimal lattice packing of  Minkowski and Chebyshev-Cohn domains $2^m D_p$ is realized with respect to the sublattices of the index $2^{m+1}$ of  critical lattices
  \[
  (-2^{-1/p},2^{-1/p}) \in \Lambda_{p}^{(1)}.
  \]   
The dencity of this optimal packing is given by the expression
\[
    V(2^m D_p)/\Delta^{(1)}_p (2^{m+1}D_p),
\]    
    where $V(2^m D_p)$ is the volume of the domainl $2^m D_p$ and $\Delta^{(1)}_p (2^{m+1} D_p)$ is the critical determinant of the domain $2^{m+1} D_p$.\\
The optimal lattice packing of  Davis domains $2^m D_p$ is realized with respect to the sublattices of the index $2^{m+1}$  of  critical lattices
  \[
  (1,0)\in\Lambda_{p}^{(0)},
  \]   
The dencity of this optimal packing is given by the expression 
\[
    V(2^m D_p )/\Delta^{(0)}_p(2^{m+1} D_p)
\]
 where $V(2^m D_p)$ is the volume of the domainl $2^m D_p$ and $\Delta^{(0)}_p (2^{m+1} D_p)$ is the critical determinant of the domain $2^{m+1} D_p$.\\
     \end{theorem}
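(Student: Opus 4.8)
The plan is to mirror the proof of Theorem \ref{opb}, one level of doubling higher, replacing the unit ball $D_p$ by the domain $2^m D_p$ and substituting the domain-level machinery of Section \ref{secpmdccdomains} for Propositions \ref{p2}, \ref{p1} and \ref{p3}. The three inputs are Proposition \ref{p7}, which exhibits the critical lattice of a doubled domain as an explicit sublattice of $\Lambda_p^{(1)}$ or $\Lambda_p^{(0)}$; Proposition \ref{p2m21m}, which converts optimal packing into criticality for the doubled domain; and Theorem \ref{ggm}, which dictates the case-split.

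First I would record, using Corollary \ref{cor1}, that $2^{m+1} D_p$ is again a centrally symmetric convex body, so the critical-determinant theory of Section \ref{secpmdccdomains} applies to it. By Proposition \ref{p7} the critical lattice of $2^{m+1} D_p$ is the sublattice of index $2^{m+1}$ of the critical lattice of $D_p$ --- that is, of $\Lambda_p^{(1)}$ in the Minkowski and Chebyshev-Cohn ranges and of $\Lambda_p^{(0)}$ in the Davis range --- whose determinant is given in closed form by (\ref{cdde1}) and (\ref{cdde0}), respectively.

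Next I would apply Proposition \ref{p2m21m}: a lattice $\Lambda$ is a packing lattice of $2^m D_p$ precisely when it is admissible for $2^{m+1} D_p$, and the density $V(2^m D_p)/d(\Lambda)$ is maximal exactly when $\Lambda$ is critical for $2^{m+1} D_p$. Since the index-$2^{m+1}$ sublattice singled out above is that critical lattice, it is in particular admissible for $2^{m+1} D_p$, hence a packing lattice of $2^m D_p$, and its packing attains the maximal, i.e. optimal, density. Writing $d(\Lambda) = \Delta_p^{(1)}(2^{m+1} D_p)$ (resp. $\Delta_p^{(0)}(2^{m+1} D_p)$) then yields the two density formulas in the statement. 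The alternative between $\Lambda_p^{(1)}$ and $\Lambda_p^{(0)}$ is forced by Theorem \ref{ggm}: the critical determinant equals $\Delta(p,1)$ when $1 \le p \le 2$ or $p \ge p_0$, and $\Delta(p,\sigma_p)$ when $2 \le p \le p_0$.

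I expect the substantive step to be Proposition \ref{p7} --- concretely, the verification that the index-$2^{m+1}$ sublattice is genuinely \emph{critical} for $2^{m+1} D_p$ rather than merely admissible. This amounts to checking that its determinant equals the infimum $\Delta_p^{(i)}(2^{m+1} D_p)$, which is exactly what the closed-form evaluations (\ref{cdde0})--(\ref{cdde1}) supply, once one knows, from the parametrization (\ref{eq2})--(\ref{eq3}), that the sublattice still meets the boundary of the scaled domain in three pairs of contact points. With criticality in hand, the admissibility, packing, and optimality conclusions follow formally from Proposition \ref{p2m21m}, and only the routine bookkeeping of volumes against determinants remains.
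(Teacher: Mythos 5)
Your proposal is correct and takes essentially the same route as the paper's own proof: both identify, via Proposition \ref{p7}, the critical lattice of the doubled domain as the index-$2^{m+1}$ sublattice of $\Lambda_p^{(1)}$ (resp.\ $\Lambda_p^{(0)}$, with the case split dictated by Theorem \ref{ggm}), then deduce that this lattice is admissible for $2^{m+1}D_p$, hence a packing lattice of $2^m D_p$, and finally invoke the density criterion (Propositions \ref{p3}, \ref{p2m21m}) to conclude maximality and read off the density as volume over critical determinant. The only differences are cosmetic: the paper phrases the argument at level $2^{m-1}D_p$ with index-$2^m$ sublattices (and cites Proposition \ref{p2} where Proposition \ref{p7} is clearly intended), while you work directly at level $m$ as in the theorem statement and fold the packing-lattice criterion into Proposition \ref{p2m21m}.
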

{\bf  Proof}.
 By Proposition \ref{p2} the critical lattice of $2^{m-1} D_p, m = 1, 2, \ldots$ is  the sublattice  of index $2^m$ of the critical lattice 
of the ball $D_p$ .
So it is the admissible lattice for $2^m D_p$ and by Proposition \ref{p1} 
is packing lattice of $2^{m-1} D_p$.
By Propositions \ref{p3}, \ref{p2m21m} the corresponding lattice packing has maximal density and so is optimal.

\section{Inscribed and circumscribed hexagons of minimum areas}
\label{secichminarea}

Denote by $\Delta (2^m D_p)$ the critical determinant of the domain $2^m D_p$.
By formulas (\ref{cdde0},\ref{cdde1}) we have:
\begin{equation}
\label{cdd}
\Delta(2^m D_p) = \left\{
                   \begin{array}{lc}
     {\Delta^{(0)}_p}(2^m D_p), \; 1 \le p \le 2, \; p \ge p_{0},\\
    {\Delta^{(1)}_p}(2^m D_p), \;  2 \le p \le p_{0};\\
                     \end{array}
                       \right.
 \end{equation}
here $p_{0}$ is a real number that is defined unique by conditions
$\Delta(p_{0},\sigma_p) = \Delta(p_{0},1),  \;
2,57 < p_{0}  < 2,58, \; p_0  \approx 2.5725 $\\

Denoted by  $Ihma_{2^m D_p}$ the minimal area of  hexagons which inscibed in the domain $2^m D_p$  and have three pairs of points on the boundary of $2^m D_p$. 

\begin{theorem}
\label{ihma}
\[
  Ihma_{2^m D_p} = 3 \cdot \Delta (2^m D_p).
\]
 \end{theorem}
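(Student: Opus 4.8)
The plan is to establish the identity $Ihma_{2^m D_p} = 3\cdot\Delta(2^m D_p)$ by relating the area of the minimal inscribed hexagon to the critical determinant through the geometry of critical lattices. The key structural fact, already available in the excerpt, is that a critical lattice $\Lambda$ of a centrally symmetric convex body contains exactly three pairs of points on the boundary (the ``shell'' of six points described in Section 2), and these six points form a centrally symmetric hexagon. The minimal inscribed hexagon with three pairs of vertices on the boundary is precisely the hexagon spanned by such a shell, so I would first argue that minimizing the area over all admissible hexagons with three antipodal vertex-pairs on $\partial(2^m D_p)$ is equivalent to minimizing over the shells of admissible lattices, which in turn is governed by the critical determinant.

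The main computation is to express the area of a centrally symmetric hexagon in terms of the determinant of the lattice generated by its vertices. Concretely, if the hexagon has vertices $\pm A,\pm B,\pm C$ in cyclic order with $C = B - A$ (the closing relation forced by central symmetry and convexity for a hexagon inscribed ``tightly'' with these six boundary contacts), then its area decomposes into six triangles, each of area $\tfrac12|\det(\cdot,\cdot)|$, and the total comes out to $3|\det(A,B)|$. Since $A$ and $B$ are a basis of the admissible lattice $\Lambda$ realizing these contact points, $|\det(A,B)| = d(\Lambda)$, giving hexagon area $= 3\,d(\Lambda)$. First I would verify this closing relation and the triangulation carefully for the shell coordinates exhibited in the examples of Section 2, then state it for the general admissible lattice via the parametrization (\ref{eq2}).

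Next I would invoke the definition of the critical determinant as the infimum of $d(\Lambda)$ over admissible lattices, together with Theorem \ref{ggm} and its corollaries, which tell us that this infimum is attained and equals $\Delta(2^m D_p)$. Minimizing the hexagon area $3\,d(\Lambda)$ over all admissible lattices $\Lambda$ whose shell lies on $\partial(2^m D_p)$ therefore yields $3\,\Delta(2^m D_p)$. I would split into the two cases of the piecewise formula (\ref{cdd}): the Davis range, where the critical lattice is $\Lambda_p^{(0)}$ with $d(\Lambda)=\Delta_p^{(0)}(2^m D_p)$, and the Minkowski/Chebyshev-Cohn range, where it is $\Lambda_p^{(1)}$ with $d(\Lambda)=\Delta_p^{(1)}(2^m D_p)$; in each case the minimal hexagon is the shell hexagon of the corresponding critical lattice and its area is $3\,\Delta(2^m D_p)$.

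The main obstacle I expect is justifying that the \emph{minimal-area} inscribed hexagon with three antipodal contact pairs must come from an admissible lattice at all, rather than from six arbitrary boundary points with no lattice structure. This requires showing that for a centrally symmetric convex body, any inscribed hexagon with three pairs of antipodal vertices on the boundary determines a lattice that is admissible for the body, so that minimizing its area is constrained from below by the critical determinant, while the critical lattice's shell realizes equality. The convexity and central symmetry of $2^m D_p$ (Corollary \ref{cor1}) are exactly what make the closing relation $C=B-A$ and the admissibility argument go through; the delicate point is controlling degenerate or non-convex vertex configurations, which I would rule out by the strict convexity of Minkowski curves noted in Section 2 for $1<p<\infty$ and handle the limiting polygonal cases $p=1,\infty$ separately.
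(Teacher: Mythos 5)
Your core computation is correct and is in fact the content of the classical result that the paper's one-line proof delegates to (``Follow from Theorems \ref{ggm}, \ref{opd} and [Lekkerkerker]''): a centrally symmetric hexagon with vertices $\pm A,\pm B,\pm C$ in cyclic order satisfying the closing relation $C=B-A$ has area $3\lvert\det(A,B)\rvert=3\,d(\Lambda)$; a lattice whose shell $\pm A,\pm B,\pm(B-A)$ lies on the boundary of a convex symmetric body is admissible (Minkowski's lemma); every critical lattice has such a shell; and $\min d(\Lambda)=\Delta(2^mD_p)$ by Theorem \ref{ggm} together with the rescaled determinants (\ref{cdde0}), (\ref{cdde1}). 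So in spirit you are reconstructing exactly the argument behind the citation.

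However, there is a genuine gap at the point you yourself flag as the main obstacle, and your proposed resolution of it is false. The closing relation $C=B-A$ is \emph{not} forced by central symmetry and convexity, and an inscribed hexagon with three antipodal vertex pairs on the boundary does \emph{not} in general determine an admissible lattice. Concretely, take $p=2$, $m=0$ (the unit disk, $\Delta(D_2)=\tfrac{\sqrt3}{2}$): the points $A=(1,0)$, $B=(0,1)$, $C=\bigl(-\tfrac{1}{\sqrt2},\tfrac{1}{\sqrt2}\bigr)$ and their antipodes are in convex cyclic position on the circle, yet the hexagon they span has area $1+\sqrt2\approx 2.414$, strictly less than $3\Delta(D_2)=\tfrac{3\sqrt3}{2}\approx 2.598$; worse, the thin hexagons with vertices $\pm(1,0)$, $\pm(\cos\varepsilon,\sin\varepsilon)$, $\pm(\cos 2\varepsilon,\sin 2\varepsilon)$ have area $2\sin\varepsilon+\sin 2\varepsilon\to 0$. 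Hence, if the competitor class is literally ``all inscribed hexagons with three antipodal pairs of vertices on the boundary,'' the lower bound $Ihma_{2^mD_p}\ge 3\Delta(2^mD_p)$ fails outright, and no convexity or strict-convexity argument can rescue it. The theorem is true only when the minimum ranges over hexagons that are shells of admissible lattices, i.e., those satisfying the closing relation; this restriction must be taken as part of the definition of $Ihma_{2^mD_p}$ (it is what the paper's phrase ``have three pairs of points on the boundary'' is intended to mean, echoing its standing description of admissible lattices), not derived from convexity as you propose. With that reading, your first two steps do yield the identity: admissibility of lattice hexagons plus the definition of the critical determinant gives the lower bound $3\Delta(2^mD_p)$, and the shell of the critical lattice attains it.
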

 {\bf  Proof}. Follow from Theorems \ref{ggm}, \ref{opd} and \cite{lek}.\\
 
 Respectively denoted by  $Shma_{2^m D_p}$ the minimal area of  hexagons which circumscribed to the domain $2^m D_p$  and have three pairs of points on the boundary of $2^m D_p$. 

\begin{theorem}
\label{chma}
\[
        Shma_{2^m D_p}   = 4 \cdot \Delta (2^m D_p).
        \]
 \end{theorem}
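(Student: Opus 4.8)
The final statement to prove is Theorem~\ref{chma}, which asserts that the minimal area of a hexagon circumscribed about the domain $2^m D_p$ and touching the boundary in three pairs of points equals $4 \cdot \Delta(2^m D_p)$.

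\medskip

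The plan is to reduce the circumscribed case to the critical determinant already computed, exactly in parallel with the proof of Theorem~\ref{ihma} for inscribed hexagons. First I would recall the classical fact (see Cassels and Lekkerkerker) that for a centrally symmetric convex body $K$ in the plane, the critical determinant $\Delta(K)$ is realized by a critical lattice whose fundamental domain is tied to a minimal hexagon. Concretely, the three pairs of boundary points of the critical lattice on $\partial K$ are the vertices of an inscribed affine-regular hexagon of minimal area, and this minimal inscribed area equals $3\,\Delta(K)$; this is precisely what Theorem~\ref{ihma} records. The circumscribed hexagon is then obtained by taking the tangent lines (support lines) to $K$ at these same six contact points, or equivalently at the midpoints of the inscribed hexagon's edges, and the standard relation between the areas of the minimal inscribed and minimal circumscribed affine-regular hexagons of a centrally symmetric convex body gives the ratio $4 : 3$.

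\medskip

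The key steps, in order, would be: (i) apply Theorem~\ref{ggm} and the determinant formulas (\ref{cdde0}), (\ref{cdde1}) to have $\Delta(2^m D_p)$ explicitly in both the $\Lambda_p^{(0)}$ and $\Lambda_p^{(1)}$ regimes, as summarized in (\ref{cdd}); (ii) invoke Theorem~\ref{opd} to identify the critical lattice of $2^m D_p$ with the appropriate sublattice, guaranteeing that a minimal-area circumscribed hexagon with three pairs of contact points on $\partial(2^m D_p)$ exists and is realized by that lattice; (iii) establish the area ratio by an affine-invariance argument. Since area ratios of hexagons inscribed in and circumscribed about a centrally symmetric convex body are preserved under the linear maps that send one admissible lattice to another, it suffices to verify the factor $4$ in a single normalized model, e.g. the regular hexagon case corresponding to $p = 2$, where a circumscribed regular hexagon has area $4/3$ times that of the inscribed one; combining this ratio with $Ihma_{2^m D_p} = 3\,\Delta(2^m D_p)$ yields $Shma_{2^m D_p} = 4\,\Delta(2^m D_p)$.

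\medskip

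The main obstacle is step (iii): justifying that the minimal circumscribed hexagon shares the same three pairs of contact directions as the minimal inscribed hexagon, and that the $4:3$ ratio is genuinely affine-invariant across all admissible parametrizations (\ref{eq2}) rather than special to the Euclidean ($p=2$) case. One must argue that minimizing the area of a circumscribed hexagon with three pairs of boundary contacts is equivalent, via support-line duality, to minimizing the inscribed hexagon, so that the same critical lattice is optimal for both problems; the convexity and strict convexity of the Minkowski curves $C_p$ for $1 < p < \infty$ (noted earlier) ensure the support lines at the contact points are well defined and the hexagon is nondegenerate. Once the contact points coincide, the ratio $4:3$ follows from the elementary planar geometry of an affine-regular hexagon and its tangent hexagon, and the result then follows directly from Theorems~\ref{ggm}, \ref{opd} and the cited result in Lekkerkerker.
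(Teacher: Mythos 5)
The paper's own proof of Theorem~\ref{chma} is a one-line citation: it invokes Theorem~\ref{ggm}, Theorem~\ref{opd} and Lekkerkerker \cite{lek}, where the needed classical fact (essentially Reinhardt--Mahler) is that for a centrally symmetric convex body $K$ in the plane the minimal area of a circumscribed hexagon equals $\Delta(2K)=4\Delta(K)$, the hexagon being the cell of the densest lattice packing, tangent to $K$ at the halves of the points of the critical lattice of $2K$ on its boundary. Your steps (i) and (ii) are consistent with that route. The problem is step (iii), where you try to \emph{derive} the factor $4$ from Theorem~\ref{ihma} by an affine-invariance reduction to $p=2$: this step is genuinely broken. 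For $p\neq 2$ the body $D_p$ is not an affine image of $D_2$, and the linear maps that carry one admissible (or critical) lattice to another do not carry $D_p$ to itself or to $D_2$ (the linear automorphism group of $D_p$ is only the dihedral group of the square). Since $Shma$ is an invariant of the body and not of the lattice alone, verifying the ratio $4:3$ for the circle transfers no information to other values of $p$.

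The deeper issue is your claim that, once the contact points coincide with the vertices of the inscribed affine-regular hexagon, the $4:3$ ratio ``follows from elementary planar geometry.'' It does not: the support lines are determined by the body, not by the six vertices. Normalize $P_1=(1,0)$, $P_2=(\tfrac12,\tfrac{\sqrt3}{2})$, $P_3=(-\tfrac12,\tfrac{\sqrt3}{2})$, take vertical support lines at $\pm P_1$ and support lines of slopes $m_2,m_3$ at $\pm P_2,\pm P_3$; a direct computation gives the circumscribed hexagon area $2\sqrt3+\frac{(m_2+m_3)^2}{4(m_2-m_3)}$, which equals $4\Delta=2\sqrt3$ only when $m_3=-m_2$. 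Concretely, with $m_2=-\sqrt3$, $m_3=0$ one gets a centrally symmetric convex hexagonal body passing through all six points, for which the lattice $\langle P_1,P_2\rangle$ is admissible (but not critical) and the circumscribed hexagon at those contact points has area $\tfrac{7\sqrt3}{4}$, i.e.\ ratio $7:6$, not $4:3$, to the inscribed hexagon. So the $4:3$ ratio holds precisely \emph{because} the configuration is critical — it is equivalent to the statement being proved, not an input to it. To repair the argument you must either cite the Reinhardt--Mahler theorem from \cite{lek} (as the paper does), or verify the tangent-hexagon area directly for $D_p$ using the reflection symmetry of the critical configurations $\Lambda_p^{(0)}$ and $\Lambda_p^{(1)}$ (symmetry about a coordinate axis, resp.\ the diagonal, forces $m_3=-m_2$ in the normalization above, after which the area collapses to $4\Delta$ independently of the common slope).
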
 
{\bf  Proof}.  Follow from Theorems \ref{ggm}, \ref{opd} and \cite{lek}.

\section{ Direct systems,  direct limits and duality}
\label{secdsdld}

Various variants of direct systems and direct limits are introduced and studied in \cite{Pontryagin,bur,am}.\\
Here we give their simplified versions sufficient for our purposes.
Let $X$ be a set. By a binary relation over $X$ we understand a subset of the Cartesian product $X \times X$.
By a preorder on $X$ we understand a binary relation over $X$ that is reflexive and transitive.

\begin{definition}
\label{dset}
 Let ${\mathbb N}_0$ be the set of natural numbers with zero.
A preoder $N$ on ${\mathbb N}_0$  is called a {\it directed set} if for each pair $k, m \in N$ there exists an $n \in N$ for which 
$k \le n$ and $m \le n$.
A subset  $N'$ is {\it cofinal} in  $N$ if, for each $m \in N$ their exists an $n \in N'$ such that $m \le n$.
\end{definition}

\begin{definition}
\label{dds}
A {\it direct} (or {\it inductive}) {\it system of sets} $\{X, \pi\}$ over a directed set $N$ is a function which attaches to each 
$m \in N$ a set $X^m$, and to each pair $m, n$  such that $m \le n$ in $N$, a map $\pi_m^n: X^m \to X^n$ such that, for each 
$m \in N$,  $\pi_m^m = Id$, and for $m \le n \le k$ in $N$, $\pi_n^k \pi_m^n = \pi_m^k$.
\end{definition}

\begin{remark}
We will consider direct systems of sets, topological spaces, groups and free ${\mathbb Z}$-modules.
\end{remark}

 \begin{proposition}
A directed set $N$ forms a category with elements are natural numbers $n, m, \ldots$  with zero and with morphisms
 $m \to n$ defined by the relation $m \le n$.~ A direct system over $N$ is a covariant functor from $N$ to the category of sets and maps, or to the category of topological spaces and continuous mappins, or to the category of groups  and homomorphisms or to the category of  ${\mathbb Z}$-modules and homomorphisms.
 \end{proposition}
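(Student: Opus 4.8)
The plan is to verify the two assertions in turn: first that the data attached to a directed set $N$ genuinely satisfies the axioms of a category, and then that the notion of direct system in Definition \ref{dds} coincides, for each of the four listed target categories, with that of a covariant functor out of $N$.

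First I would fix the categorical data. I take the objects to be the elements of $N$, and for $m, n \in N$ I declare $\mathrm{Hom}(m,n)$ to be a one-element set when $m \le n$ and empty otherwise, writing $\iota_m^n$ for the unique arrow $m \to n$ when it exists. The reflexivity of the preorder ($m \le m$ for all $m$) supplies an arrow $\iota_m^m \in \mathrm{Hom}(m,m)$, which I take to be the identity $\mathrm{id}_m$. Transitivity ($m \le n$ and $n \le k$ imply $m \le k$) supplies a composite, forcing $\iota_n^k \circ \iota_m^n = \iota_m^k$. The point to be careful about is the associativity and unit axioms, but these hold automatically: since every hom-set has at most one element, any two parallel arrows coincide, so every diagram of arrows in $N$ commutes. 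Hence $N$ is a category (a thin, or preorder, category), which proves the first claim.

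Next I would match functors with direct systems. Given a covariant functor $F$ from $N$ to $\mathcal{C}$, where $\mathcal{C}$ is one of the categories of sets, topological spaces, groups, or $\mathbb{Z}$-modules, I set $X^m := F(m)$ and, for $m \le n$, $\pi_m^n := F(\iota_m^n) : X^m \to X^n$. Functoriality then reads off exactly as the two conditions of Definition \ref{dds}: preservation of identities gives $\pi_m^m = F(\mathrm{id}_m) = \mathrm{Id}_{X^m}$, and preservation of composition gives, for $m \le n \le k$, the relation $\pi_n^k \pi_m^n = F(\iota_n^k)F(\iota_m^n) = F(\iota_m^k) = \pi_m^k$. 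Conversely, a direct system $\{X, \pi\}$ defines a functor by $F(m) = X^m$ and $F(\iota_m^n) = \pi_m^n$, the direct-system axioms being precisely what is needed for $F$ to respect identities and composition. This is a bijective correspondence, and since the arrows $\pi_m^n$ carry exactly the structure demanded by $\mathcal{C}$ (arbitrary maps, continuous maps, group homomorphisms, or module homomorphisms respectively), a single argument dispatches all four target categories at once.

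I do not anticipate a genuine obstacle here: the statement is essentially a repackaging of Definitions \ref{dset} and \ref{dds} in categorical language. The only delicate point worth flagging explicitly is that the category attached to $N$ is thin, so the associativity and identity laws require no computation, and that the several variants of a direct system differ only in the ambient category $\mathcal{C}$, which is exactly why the functorial reformulation covers every case uniformly.
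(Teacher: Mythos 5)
Your proof is correct: the thin-category verification of the axioms and the identification of direct systems with covariant functors out of $N$ is exactly the routine argument this proposition requires. The paper's own proof is the single word ``Obviously,'' so your write-up simply supplies in full the standard verification the author chose to omit; there is no divergence in approach to report.
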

 {\bf  Proof}. Obviously.

\begin{definition}
\label{dsm}
Let $\{X, \pi\}$ and $\{Y, \psi\}$ be direct systems over $M$ and $N$ respectively. Then a map 
\[
  \Phi:   \{X, \pi\}  \to \{Y, \psi\}
\] 

consisting of a map $\phi: M \to N$, and for each $m \in M$, a map
\[
 {\phi}^m:  X^m \to Y^{{\phi}(m)}
\]
 such that, if $m \le n$ in $M$, then commutativity holds in the diagram
\[
 \begin{CD}
   X^m @>\pi>> X^n\\
   @VV{\phi}V          @VV{\phi}V\\
   Y^{\phi(m)} @>{\psi}>>    Y^{\phi(n)}
 \end{CD}
\]

\end{definition}

\subsection{ Direct systems of Minkowski domains and their limits}

Here we will consider direct systems of Minkowski balls and domains as well as direct systems  of critical lattices.
 We use the notation according to Remark \ref{r2}.

The direct system of Minkowski balls and domains has the form (\ref{dsm}), where the multiplication by $2$  is the continuous mapping
\begin{equation}
\label{dsm}
 \begin{CD}
 D_p @>2>> 2 D_p @>2>> 2^2 D_p @>2>> \cdots @>2>> 2^m D_p @>2>> \cdots 
 \end{CD}
\end{equation}

The direct system of  critical lattices  has the form (\ref{dsml}), where the multiplication by $2$  is the homomorphism of abelian groups
\begin{equation}
\label{dsml}
 \begin{CD}
 \Lambda_{p} @>2>> 2 \Lambda_{p} @>2>> 2^2 \Lambda_{p} @>2>> \cdots @>2>> 2^m \Lambda_{p} @>2>> \cdots 
 \end{CD}
\end{equation}

Note first that real planes ${\mathbb R}^2 = (x,y)$ with Minkowski norms
\[
   |x|^p + |y|^p, \; p \in {\mathbb R}, p \ge 1,
\]
 are Banach spaces ${\mathbb B}^2_p$ .
By (general) Minkowski balls we  mean (two-dimensional) balls in ${\mathbb B}^2_p$ of the form
  \begin{equation}
   \label{eq1}
   D_p: \;  |x|^p + |y|^p \le 1, \; p \ge 1. 
  \end{equation}. 
  
 By Pontryagin duality \cite{Pontryagin} every Banach space considered in its additive structure which is an abelian topological group $G$ is reflexive that means  the existance of topological isomorphism bitween $G$ and its bidual $G^{\wedge \wedge}$.

       In our considerations we have direct systems of Minkowski balls, Minkowski domains and direct systems of critical lattices with respective maps and homomorphisms. Consider system  (\ref{dsm}) and its direct limit from the point of view of metric geometry. 
System (\ref{dsm}) is a direct system of neighborhoods of zero, and its direct limit is a 
neighborhood of zero constructed in accordance with the rules for constructing 
the direct limit of the corresponding direct system of topological spaces.
We denote this direct limits by $D^{dirlim}_p$. Accordingly, we denote the direct limits of critical lattices by $\Lambda_{p}^{dirlim}$.

\subsubsection{Direct systems}

We consider the following classes of direct systems of Minkowski balls and domains and their critical lattices.

\begin{itemize}
  
\item  {\it  Minkowski direct systems of balls and domains}:   intr $m \ge 1$, for $1 \le p<2$;
\begin{equation}
\label{dsmbd}
 \begin{CD}
 D_p @>2>> 2 D_p @>2>> 2^2 D_p @>2>> \cdots @>2>> 2^m D_p @>2>> \cdots 
 \end{CD}
\end{equation}

\item {\it  Minkowski direct systems of  critical lattices}:   int $m \ge 1$, for $1 \le p<2$;
\begin{equation} 
\label{dsmcl}
 \begin{CD}
 \Lambda_{p} @>2>> 2 \Lambda_{p} @>2>> 2^2 \Lambda_{p} @>2>> \cdots @>2>> 2^m \Lambda_{p} @>2>> \cdots 
 \end{CD}
\end{equation}

\item {\it Davis direct systems of  balls and domains}:   int $m \ge 1$, for $p_{0} > p \ge 2$;
\begin{equation}
\label{dsdbd}
 \begin{CD}
 D_p @>2>> 2 D_p @>2>> 2^2 D_p @>2>> \cdots @>2>> 2^m D_p @>2>> \cdots 
 \end{CD}
\end{equation}

\item {\it Davis direct systems of  critical lattices}:   int $m \ge 1$, for $p_{0} > p \ge 2$;
\begin{equation}
\label{dsdcl}
 \begin{CD}
 \Lambda_{p} @>2>> 2 \Lambda_{p} @>2>> 2^2 \Lambda_{p} @>2>> \cdots @>2>> 2^m \Lambda_{p} @>2>> \cdots 
 \end{CD}
\end{equation}

Below, for brevity, we will denote the balls, domains, and lattices of Chebyshev-Cohn by CC.

\item {\it CC direct systems of  balls and domains}:  int $m \ge 1$,  for $ p \ge p_{0}$;
\begin{equation}
\label{dsccbd}
 \begin{CD}
 D_p @>2>> 2 D_p @>2>> 2^2 D_p @>2>> \cdots @>2>> 2^m D_p @>2>> \cdots 
 \end{CD}
\end{equation}

\item {\it CC direct systems of  critical lattices}:  int $m \ge 1$,  for $ p \ge p_{0}$;
\begin{equation}
\label{dscccl}
 \begin{CD}
 \Lambda_{p} @>2>> 2 \Lambda_{p} @>2>> 2^2 \Lambda_{p} @>2>> \cdots @>2>> 2^m \Lambda_{p} @>2>> \cdots 
 \end{CD}
\end{equation}

\end{itemize}

\subsubsection{Direct limits}

Let us calculate direct limits of these derect systems.
Let ${\mathbb Q}_2$ and ${\mathbb Z}_2$ be respectively the field of $2$-adic numbers and its ring of integers.

 \begin{proposition}
  $D^{dirlim}_p = \varinjlim  2^m D_p  \in ({\mathbb Q}_2 / {\mathbb Z}_2)  D_p =
   (\bigcup_m \frac{1}{2^m} {\mathbb Z}_2/{\mathbb Z}_2) D_p.$
 \end{proposition}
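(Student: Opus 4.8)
The plan is to compute the colimit straight from its construction and then to recognize the result by matching the direct system (\ref{dsmbd}) of magnified domains, term by term, with the standard presentation of the Pr\"ufer $2$-group as a direct limit. Throughout I read the asserted relation as the identification of the colimit object with the right-hand side, the inner group $\mathbb{Q}_2/\mathbb{Z}_2$ acting only through its finite stages.

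First I would recall, from Definition \ref{dds}, the explicit form of $\varinjlim 2^m D_p$ over the directed set $\mathbb{N}_0$ of Definition \ref{dset}: it is the quotient of the disjoint union $\coprod_{m \ge 0} 2^m D_p$ by the equivalence relation that identifies $x \in 2^m D_p$ with its image $\pi_m^n(x) \in 2^n D_p$, carrying the final topology. Since every transition map in (\ref{dsmbd}) is multiplication by $2$, each class is represented by a tower of doublings, and the index $m$ records precisely the power of $2$ by which the base copy $D_p$ has been magnified.

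Next I would import the $2$-adic side. I would use the standard presentation
\[
  \mathbb{Q}_2/\mathbb{Z}_2 = \bigcup_{m \ge 0} \tfrac{1}{2^m}\mathbb{Z}_2/\mathbb{Z}_2 = \varinjlim \big(\mathbb{Z}/2^m\mathbb{Z},\ \times 2\big),
\]
in which each inclusion $\tfrac{1}{2^m}\mathbb{Z}_2/\mathbb{Z}_2 \hookrightarrow \tfrac{1}{2^{m+1}}\mathbb{Z}_2/\mathbb{Z}_2$ becomes, under the canonical isomorphism $\tfrac{1}{2^m}\mathbb{Z}_2/\mathbb{Z}_2 \cong \mathbb{Z}/2^m\mathbb{Z}$, multiplication by $2$. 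This is a direct system over the same directed set $\mathbb{N}_0$, with the same transition maps as (\ref{dsmbd}). The crux is then to exhibit a map of direct systems in the sense of Definition \ref{dsm}: take $\phi = \mathrm{id}$ on $\mathbb{N}_0$ and let the $m$-th component be the scaling identification of the magnified domain $2^m D_p$ with the $m$-th stage $(\tfrac{1}{2^m}\mathbb{Z}_2/\mathbb{Z}_2)D_p$; the required squares commute because on both sides the transition maps are ``multiply by $2$.'' As the direct limit is the colimit and hence functorial (the categorical description preceding Definition \ref{dsm}), an isomorphism of direct systems yields equality of direct limits, giving $D^{dirlim}_p = \varinjlim 2^m D_p = (\mathbb{Q}_2/\mathbb{Z}_2)D_p$.

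The main obstacle is to pin down the meaning of the right-hand side: $\mathbb{Q}_2/\mathbb{Z}_2$ is a torsion group, not a set of real scalars, so $(\mathbb{Q}_2/\mathbb{Z}_2)D_p$ cannot be a subset of the plane and must be read, as the final union in the statement already dictates, as the colimit of the magnified copies $(\tfrac{1}{2^m}\mathbb{Z}_2/\mathbb{Z}_2)D_p$. Once that reading is fixed, the remaining care lies in checking that the final topology on the colimit of the domains is transported correctly across the identification, and in handling the degenerate base stage $\mathbb{Z}_2/\mathbb{Z}_2 = 0$ at $m=0$, so that the nontrivial base copy $D_p$ of the geometric system is correctly matched against the $2$-adic filtration without an index shift spoiling the correspondence.
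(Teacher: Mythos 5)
The paper offers essentially no argument to compare against: its entire proof is the sentence ``Follow from properties of direct systems and their direct limits'' with citations to Bourbaki and Steenrod--Eilenberg. Your proposal must therefore stand on its own, and its central step does not hold up. The claimed isomorphism of direct systems cannot exist as stated: in the geometric system (\ref{dsmbd}) every transition map $\times 2\colon 2^m D_p \to 2^{m+1} D_p$ is a bijection (indeed a homeomorphism onto the target), whereas in the $2$-adic system every transition $\tfrac{1}{2^m}\mathbb{Z}_2/\mathbb{Z}_2 \hookrightarrow \tfrac{1}{2^{m+1}}\mathbb{Z}_2/\mathbb{Z}_2$ (equivalently $\times 2\colon \mathbb{Z}/2^m\mathbb{Z} \to \mathbb{Z}/2^{m+1}\mathbb{Z}$) is a proper inclusion of index $2$. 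If levelwise bijections $\phi^m$ existed making the squares in the paper's definition of a map of direct systems commute, then each $2$-adic transition would equal $\phi^{m+1}\circ \pi_m^{m+1}\circ (\phi^m)^{-1}$ and hence be surjective --- a contradiction. The phrase ``on both sides the transition maps are multiply by $2$'' conflates two maps of entirely different nature, one invertible and one not.

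The alternative you half-adopt --- declaring that the $m$-th stage $(\tfrac{1}{2^m}\mathbb{Z}_2/\mathbb{Z}_2)D_p$ simply \emph{means} $2^m D_p$ --- rescues the commuting squares but makes the argument circular: the proposition's assertion becomes the definition of its own right-hand side, and nothing is proved. What can be said honestly is the following: since all transition maps of (\ref{dsmbd}) are homeomorphisms, $\varinjlim 2^m D_p$ is homeomorphic to $D_p$ itself (and likewise $\varinjlim 2^m \Lambda_p \cong \Lambda_p$ for (\ref{dsmcl})), so the Pr\"ufer group $\mathbb{Q}_2/\mathbb{Z}_2 = \varinjlim(\mathbb{Z}/2^m\mathbb{Z}, \times 2)$ can enter only as a formal analogy of indexing, since that limit is taken over a system with non-invertible transitions. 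You correctly recall the colimit construction and the standard presentation of $\mathbb{Q}_2/\mathbb{Z}_2$, and you even flag the interpretive difficulty yourself in your final paragraph; but flagging it is not resolving it, and as written the matching step is either false (as an isomorphism of direct systems) or vacuous (as a definition).
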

 
  {\bf  Proof}. Follow from properties of direct systems and their direct limits \cite{bur,stei}.

 \begin{proposition}
  $\Lambda_{p}^{dirlim} = \varinjlim  2^m \Lambda_{p}  \in ({\mathbb Q}_2 / {\mathbb Z}_2)  \Lambda_{p} = 
  (\bigcup_m \frac{1}{2^m} {\mathbb Z}_2/{\mathbb Z}_2) \Lambda_{p}.$
 \end{proposition}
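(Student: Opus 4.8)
The plan is to reduce everything to the universal construction of the direct limit of abelian groups and then to a single model computation, exactly parallel to the preceding proposition for the balls. First I would recall the explicit realization of $\varinjlim$ for a direct system of free ${\mathbb Z}$-modules over the directed set ${\mathbb N}_0$ (Definitions \ref{dset}, \ref{dds}): the colimit of system (\ref{dsmcl}) (resp.\ (\ref{dsdcl}), (\ref{dscccl})) is the quotient of $\bigsqcup_{m \ge 0} 2^m \Lambda_p$ by the relation identifying $x \in 2^m\Lambda_p$ with its image $2x \in 2^{m+1}\Lambda_p$ under the transition homomorphism. Because every transition map is a group homomorphism, the group law descends to the quotient, and the resulting object carries the required universal property, so it \emph{is} $\Lambda_p^{dirlim}$.

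Second, I would fix a ${\mathbb Z}$-basis $\{\lambda^{(1)},\lambda^{(2)}\}$ of $\Lambda_p$ (available from the parametrization (\ref{eq2}) and the Example), so that $\Lambda_p \cong {\mathbb Z}^2$ and each transition map becomes multiplication by $2$ on each coordinate. The computation then splits into two identical rank-one problems, and the whole statement is the rank-two version of the standard identity for the system ${\mathbb Z} \to {\mathbb Z} \to \cdots$ with every arrow multiplication by $2$. Concretely I would exhibit the compatible family sending the class $[x,m]$ to $2^{-m}x \bmod \Lambda_p$ inside $({\mathbb Q}_2/{\mathbb Z}_2)\Lambda_p = (\bigcup_m \tfrac{1}{2^m}{\mathbb Z}_2/{\mathbb Z}_2)\Lambda_p$, check compatibility with the transitions (since $2^{-(m+1)}(2x) = 2^{-m}x$), and verify bijectivity at each finite stage via $\tfrac{1}{2^m}\Lambda_p/\Lambda_p \cong ({\mathbb Z}/2^m)^2$. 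Naturality of the colimit construction then guarantees that the identification is independent of the chosen basis, so it descends to the intrinsic statement $\Lambda_p^{dirlim} = ({\mathbb Q}_2/{\mathbb Z}_2)\Lambda_p$, and since the argument uses nothing about $p$ beyond $\Lambda_p$ being a rank-two lattice, it applies verbatim to the Minkowski, Davis, and Chebyshev--Cohn cases.

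Third, I would tie the result to the section's duality theme as an independent check: the inverse system $\{\Lambda_p/2^m\Lambda_p\}$ with reduction maps has inverse limit ${\mathbb Z}_2\Lambda_p \cong {\mathbb Z}_2^2$, and under the Pontryagin duality invoked earlier the direct limit computed here is precisely its dual, namely $({\mathbb Q}_2/{\mathbb Z}_2)\Lambda_p$. The very same argument applied to the nested balls $2^m D_p$ in place of the lattices is what establishes the preceding proposition for $D_p^{dirlim}$, so here I would simply point to that parallel rather than repeat the verification.

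The step I expect to be the main obstacle is the precise identification of the colimit with the \emph{torsion} group ${\mathbb Q}_2/{\mathbb Z}_2$ rather than with the localization ${\mathbb Z}[1/2]$: the outcome depends entirely on how the transition maps ``multiplication by $2$'' are read off from the nesting $2^m\Lambda_p$ and on the normalization implicit in the description $\bigcup_m \tfrac{1}{2^m}{\mathbb Z}_2/{\mathbb Z}_2$, i.e.\ on which quotient by $\Lambda_p$ is being taken. Pinning down that normalization is the only delicate point; once it is fixed, the remaining verifications (well-definedness, bijectivity at each stage, and basis-independence) are routine and uniform across the three classes.
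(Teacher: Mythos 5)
Your construction does not establish the proposition, and the obstacle you flag in your final paragraph is not a removable ``normalization'' issue: it is fatal to the argument as designed. In the system (\ref{dsml}) (equivalently (\ref{dsmcl})) the transition map is multiplication by $2$ carrying $2^m\Lambda_p$ onto $2^{m+1}\Lambda_p$; read that way, every transition map is an isomorphism and $\varinjlim 2^m\Lambda_p \cong \Lambda_p$, while read as the abstract endomorphism $\times 2$ of $\mathbb{Z}^2$ the colimit is $\mathbb{Z}[1/2]\Lambda_p = \bigcup_m 2^{-m}\Lambda_p$, exactly as your compatible family $[x,m]\mapsto 2^{-m}x$ shows. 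In either reading the colimit is \emph{torsion-free}, whereas $(\mathbb{Q}_2/\mathbb{Z}_2)\Lambda_p$ is a \emph{torsion} group, so no choice of normalization can make them isomorphic. Concretely, your map $[x,m]\mapsto 2^{-m}x \bmod \Lambda_p$ is surjective but not injective: its kernel is the copy of $\Lambda_p$ consisting of the classes $[y,0]$ (equivalently, the subgroup $\Lambda_p\subset \mathbb{Z}[1/2]\Lambda_p$), and your claimed ``bijectivity at each finite stage'' fails because the stage map $x\mapsto 2^{-m}x \bmod \Lambda_p$ has kernel $2^m\Lambda_p$. The true fact you invoke, $\Lambda_p/2^m\Lambda_p \cong \tfrac{1}{2^m}\Lambda_p/\Lambda_p \cong (\mathbb{Z}/2^m)^2$, is a statement about the quotient system $\{\tfrac{1}{2^m}\Lambda_p/\Lambda_p\}$ with inclusion maps, whose colimit is indeed the Pr\"ufer group $(\mathbb{Q}_2/\mathbb{Z}_2)^2$ --- but that is a different direct system from (\ref{dsml}), and substituting it amounts to changing the proposition, not to completing your proof.

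For comparison: the paper offers no construction at all --- its entire proof is the sentence that the claim ``follows from properties of direct systems and their direct limits,'' with citations to Bourbaki and Steenrod--Eilenberg --- so there is no argument there that resolves, or even notices, the torsion-free versus torsion discrepancy. Your attempt is the natural way to make the statement precise, and it is valuable exactly because it isolates the point where the proposition as literally written breaks down; but that point cannot be deferred as ``routine once the normalization is pinned down,'' since it is the whole content of the claimed identification.
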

 
 {\bf  Proof}. Follow from properties of direct systems and their direct limits \cite{bur,stei}.\\
 

\thanks{{\bf Acknowledgments} 
The author would like to warmly thank Professor P. Boyvalenkov for his encouragement and helpful 
suggestions in the preperation of the author's paper and for comments and remarks. 
 The author is deeply grateful to the Bulgarian Academy of Sciences, the Institute of Mathematics and Informatics of the Bulgarian Academy of Sciences for their support. }

\bibliographystyle{amsplain}

\begin{thebibliography}{10}

\bibitem{via} 
M.S. Viazovska,  \textit{The sphere packing problem in
dimension 8}, Ann. of Math. (2)
185, no. 3 (2017), 991–1015.

\bibitem{ckmrv}
H. Cohn, A. Kumar, S. D. Miller, D. Radchenko, and M. Viazovska, \textit{The sphere
packing problem in dimension 24}, Ann. of Math. (2) 185, no. 3 (2017), 1017--1033.

\bibitem{Mi:DA} H. Minkowski, {\it Diophantische
Approximationen},Teubner,  Leipzig,  1907.

\bibitem{Mi:GZ} H. Minkowski, {\it Geometrie der Zahlen}, Teubner, Berlin--Leipzig, 1910.

\bibitem{M:LP} L.J. Mordell,  \textit{Lattice points in the region $\vert
Ax^4 + By^4 \vert \leq 1,$}  {\it J. London Math. Soc.}
{\bf 16}  (1941), 152--156.

\bibitem{D:NC} C. Davis,  Note on a conjecture by Minkowski, {\it
J. London Math. Soc.}, {\bf 23} (1948), 172--175.

\bibitem{Co:MC} H. Cohn, \textit{Minkowski's conjectures on critical lattices
in the metric $\{\vert\xi \vert^p+\vert\eta \vert^p \}^{{1}/{p}},$}
{\it Annals of Math.}, {\bf 51}, (2) (1950), 734--738.


\bibitem{W:MC} G. Watson, \textit{Minkowski's conjecture on the critical
lattices of the region $|x|^p+|y|^p\leq 1 \;$,} (I), (II), {\it
Jour. London Math. Soc.}, {\bf 28}, (3, 4) (1953), 305--309, 402--410.

\bibitem{Cassels}
J. W. S. Cassels, An Introduction to the Geometry of Numbers, Springer, NY, 1997.

\bibitem{lek}
 C.G. Lekkerkerker, Geometry of Numbers, NorthHolland, Amsterdam, 1969.

\bibitem{CS}
J.H. Conway, N.J.A. Sloane, 
 Sphere Packings, Lattices and Groups, Second Edition, 
Springer-Verlag, New York Berlin. 1992.

\bibitem{Ma:AC} A. Malyshev, \textit{Application of computers to the proof of a
conjecture of Minkowski's from geometry of numbers. I}, {\it Zap.
Nauchn. Semin. LOMI}, {\bf 71} (1977),  163--180.

\bibitem{Ma:AC1} A. Malyshev,\textit{ Application of computers to the proof of a
conjecture of Minkowski's from geometry of numbers. II}, {\it Zap.
Nauchn. Semin. LOMI}, {\bf 82} (1979),  29--32.

\bibitem{AndersenDuke}
N. Andersen, W. Duke,
\textit{On a theorem of  Davenport and Schmidt},
Acta Arithmetica, 198 (2021), 37-75.

\bibitem{FegesToth}
L. Fejes,  Uber einen geometrischen Satz, Math. Z., 46 (1940), 83–85.
 https://link.springer.com/article/10.1007/BF01181430


\bibitem{Thue}
A. Thue,  Uber die dichteste Zusammenstellung von kongruenten Kreisen in
einer Ebene. Christiania Vid.-Selsk. Skr. 1910, Nr. 1 (1910), 9 s.

 \bibitem{Serre}
J.P. Serre,
A Course in Arithmetic, 
 Springer-Verlag, New York Berlin. 1996. 

\bibitem{Pontryagin}
L.S. Pontryagin, 
Select Works Volume 1, CRC Press, Boca Raton London NY, 2019.

\bibitem{stei}
N. Steenrod, S. Eilenberg,
 Foundations of algebraic topology,  Princeton university press, Princeton, 1952.

\bibitem{bur}
Bourbaki, Nicolas, Topological Vector Spaces,
 Éléments de mathématique, Springer-Verlag, Berlin New York, 1987.

\bibitem{am}
M. Atiyah, I. MacDonald,
Introduction to Commutative Algebra , University of  Oxford, Oxford,  1969.

\bibitem{jar}
Jarnik, V., Uber die Gitterpunkte auf konvexen Curven, Math. Z. 24 (1926), 500- 518.

\bibitem{sdy}
Swinnerton-Dyer, H.P. The Number of Lattice Points on a Convex Curve, J.
Number Theory 6 (1974), 128–135.

\bibitem{bop}
Bombieri, E., Pila, J., The number of rational points on arcs and ovals. Duke Math. J.
59,  (1989) 337-357.

\bibitem{gas}
 Gasbarri C. Rational versus transcendental points on analytic Riemann surfaceses,
 manuscripta math. 169, 77–105 (2022).


\bibitem{GM:MM} N. Glazunov, A. V. Malyshev, 
\textit{On Minkowski's critical determinant conjecture},{\it Kibernetika}, No. 5, 10--14 (1985).

\bibitem{GM:P2} N. Glazunov, A. Malyshev. \textit{The proof of Minkowski`s conjecture concerning the critical determinant of the 
region $ |x|^p + |y|^p <1 $ near $ p = 2,$}(in Russian), {\it Doklady Akad. Nauk  Ukr.SSR} ser.A, 7 (1986).9--12 .

\bibitem{GGM:PM} N. Glazunov, A. Golovanov, A. Malyshev,
\textit{Proof of Minkowski's hypothesis about the critical determinant of
$|x|^p+|y|^p<1$ domain}, {\it Research in Number Theory 9}. Notes
of scientific seminars of LOMI. {\bf 151} Leningrad: Nauka. (1986), 40--53.

 \bibitem{Gl4}
N. Glazunov, \textit{On A. V. Malyshev’s approach to Minkowski’s conjecture concerning
the critical determinant of the region $|x|^p +|y|^p < 1$ for $p > 1$}, Chebyshevskii Sb., 
Volume 17, Issue 4  (2016), 185–193.

\bibitem{GLp}
N. Glazunov,  On Langlands program, global fields and shtukas , Chebyshevskii sbornik,
vol. 21, no. 3, (2020) 68–83.

\bibitem{Gplav}
N. Glazunov,  $p$-adic $L$-functions and  $p$-adic multiple zeta values, Chebyshevskii sbornik,
vol. 20, no. 1, (2019) 112–130.

\bibitem{GDAV}
N. Glazunov,  Duality in abelian varieties and formal groups over local fields. Chebyshevskii sbornik, vol. 19, no. 1, (2018) 44–56.

 \bibitem{Gl5}
N. Glazunov, \textit{ On packing of Minkowski balls}, Comptes rendus de l’Acad´emie bulgare Sci., 
Tome 76, No 3 (2023), 335-342.



\end{thebibliography}

\end{document}